\newtheorem{theorem}{Theorem}[section]
\newtheorem{lemma}[theorem]{Lemma}
\newtheorem{corollary}[theorem]{Corollary}
\newtheorem{definition}[theorem]{Definition}
\newtheorem{remark}[theorem]{Remark}
\newtheorem{example}[theorem]{Example}
\newcommand{\R}{{\mathbf R}}
\renewcommand{\int}{\rm Int}
\newcommand{\E}{\mathbb E}
\newcommand{\PP}{{\rm P}}
\newcommand{\p}{{\mathfrak p}}
\newcommand{\st}{\rm {St}}
\newcommand{\lk}{\rm {lk}}
\begin{document}

\title{Large random simplicial complexes, I}        % Enter your title between curly braces
\author{A. Costa and M. Farber}        % Enter your name between curly braces
\date{March 18, 2015}          % Enter your date or \today between curly braces
\maketitle

\section{Introduction}
Networks, such as the Internet as well as social and biological networks of various nature, have been the subject of intense study in recent years. 
Usually one thinks of a network as being a large graph with nodes representing objects (or sites) and edges representing connections or links between the objects. Random graph theory provides a language and mathematical techniques for studying large random networks in different contexts.

If we are interested not only in pairwise relations between the objects but also in relations between triples, quadruples etc, we will have to use instead of graphs the high dimensional simplicial complexes as geometrical models of networks. 
%While the random graph theory is a well-developed and widely known subject with active community of researchers, 
The mathematical study of large random simplicial complexes started relatively recently and several different probabilistic models of random topological objects have appeared within the last 10 years, see \cite{CFK} and \cite{Ksurvey} for surveys. 
One may mention random surfaces \cite{PS}, random 3-dimensional manifolds
\cite{DT}, random configuration spaces of linkages %\cite{FK},
 \cite{F}. 
 Linial, Meshulam and Wallach \cite{LM}, \cite{MW} studied 
an important analogue of the classical Erd\H os--R\'enyi \cite{ER} model of random graphs in the situation of high-dimensional simplicial complexes. 
The random simplicial complexes of \cite{LM}, \cite{MW} are $d$-dimensional, have the complete $(d-1)$-skeleton and their randomness shows
only in the top dimension. Some interesting results about the topology of random 2-complexes in the Linial--Meshulam model were obtained in \cite{BHK}, \cite{CCFK}, \cite{CF1}. 

%More specifically, one starts with the full $(d-1)$-skeleton of an $(n-1)$-dimensional simplex and adds $d$-faces at random, independently of each other, with probability $p$. 

A different model of random simplicial complexes was studied by M. Kahle \cite{Kahle1} and by some other authors, see for example \cite{CFH}.
These are the clique complexes of random Erd\H os--R\'enyi graphs, i.e. here one starts with a random graph in the 
Erd\H os--R\'enyi model and declares as a simplex every subset of vertices which form a {\it clique} (a subset such that every two vertices are connected by an edge). 
Compared with the Linial - Meshulam model, the clique complex has randomness in dimension one but it influences its structure in all the higher dimensions.

In \cite{CF14} we initiated the study of a more general and more flexible model of random simplicial complexes with randomness in all dimensions. 
Here one starts with a set of $n$ vertices and retain each of them with probability $p_0$; on the next step one connects every pair of retained vertices by an edge with probability $p_1$, and then fills in every triangle in the obtained random graph with probability $p_2$, and so on. 
As the result we obtain a random simplicial complex depending on the set of probability parameters 
$$(p_0, p_1, \dots, p_r), \quad 0\le p_i\le 1.$$
Our multi-parameter random simplicial complex includes both Linial-Meshulam and random clique complexes as special cases. 

The topological and geometric properties of multi-parameter random simplicial complexes depend on the whole set of parameters and their thresholds can be understood as convex subsets and not as single numbers as in all the previously studied models. 

In this paper we develop further the multi-parameter model. Firstly, we give an intrinsic characterisation of the multi-parameter probability measure. 
Secondly, we show that in multi-parameter random simplicial complexes the links of simplexes and their intersections are also multi-parameter random simplicial complexes. Thirdly, we find conditions under which a multi-parameter random simplicial complex is connected and simply connected. 

Note that already in the case of random clique complex, the links of simplexes are two-parameter random simplicial complexes, see Example \ref{ex33}.

In \cite{CF15a} we state {\it a homological domination principle} for random simplicial complexes, claiming that the Betti number 
in one specific dimension $k=k(\p)$ (which is explicitly determined by the probability multi-parameter $\p$) significantly dominates the Betti numbers in all other dimensions. 
We also state and discuss evidence for two interesting conjectures which would imply a stronger version of the domination principle, namely that generically homology of random simplicial complex coincides with that of 
 a wedges of spheres of dimension $k=k(\p)$; moreover, for $k=k(\p)\ge 3$ a random complex collapses to a wedge of spheres of dimension 
 $k=k(\p)$.

In the following papers we shall describe the properties of fundamental groups of the multi-parameter random simplicial complexes and also their Betti numbers.

In this paper we use the following notations. 

Given a simplicial complex $Y$ and a simplex $\sigma\subset Y$ we denote by $\st_Y(\sigma)$ {\it the star} of $\sigma$ in $Y$. 
%Recall that $\st_Y(\sigma)$ is the union of all closed simplexes $\tau\subset Y$ containing $\sigma$. 
A simplex $\tau\subset Y$ belongs to the star 
$\st_Y(\sigma)$ iff the union of the sets of vertices $V(\sigma)\cup V(\tau)$ spans a simplex of $Y$. Clearly, $\st_Y(\sigma)$ is a simplicial subcomplex of $Y$. 

{\it The link of a simplex} $\sigma$ in $Y$ is 
defined as the simplicial subcomplex of $\st_Y(\sigma)$ consisting of the simplexes $\tau\subset \st_Y(\sigma)$ such that $V(\tau)\cap V(\sigma)=\emptyset$. 

This research was supported by the EPSRC.

\section{Multi-parameter random simplicial complexes}  

\subsection{Faces and external faces} \label{1.1}
   Let $\Delta_n$ denote the simplex with the vertex set $\{1, 2, \dots, n\}$. We view $\Delta_n$ as an abstract simplicial complex of dimension $n-1$. 

Given a simplicial subcomplex $Y\subset \Delta_n$, we denote by $f_i(Y)$ the number of {\it $i$-faces} of $Y$ (i.e. $i$-dimensional simplexes of $\Delta_n$ contained in $Y$). 
We shall use the symbol $F(Y)$ to denote the set of all faces of $Y$. 

\begin{definition}
An external face of a subcomplex $Y\subset \Delta_n$ is a simplex $\sigma \subset \Delta_n$ such that $\sigma \not\subset Y$ but the boundary of $\sigma$ is contained in $Y$, 
$\partial \sigma \subset Y$. 
\end{definition}
We shall denote by $E(Y)$ the set of all external faces of $Y$; the symbol $e_i(Y)$ will indicate the number of 
$i$-dimensional external faces of $Y$. 

%Let us illustrate this Definition by a few examples. 
A vertex $i\in \{1, \dots, n\}$ is an external vertex of $Y\subset \Delta_n$ iff $i\notin Y$. 
An edge $(ij)$ is an external edge of $Y$ iff $i,j\in Y$ but $(ij)\not\subset Y$. 

For $i=0$, 
we have $e_0(Y)+f_0(Y)=n$ and for $i>0$,
$$f_i(Y)+e_i(Y)\le \binom n {i+1}.$$

Note that for any simplex $\sigma\subset \left(\Delta_n- Y\right)$ which is not a simplex of $Y$ has a face $\sigma'\subset \sigma$ which is an external face of $Y$. In other words,
the complement $\Delta_n-Y$ is the union of the open stars of the external faces of $Y$, 
$$\Delta_n-Y \, =\, \bigcup_{\sigma\in E(Y)} {\rm St}(\sigma).$$ 

For two subcomplexes $Y, Y'\subset \Delta_n$, one has $Y\subset Y'$ if and only if for any external face $\sigma'$ of $Y'$ there is a face $\sigma\subset \sigma'$ which is an external face of $Y$. 

\subsection{The model}
Fix an integer $r\ge 0$ and a sequence $${\mathfrak p}=(p_0, p_1, \dots, p_r)$$ of real numbers satisfying $$0\le  p_i\le 1.$$ Denote  
$$q_i=1-p_i.$$
For a simplex $\sigma\subset \Delta_n$ we shall use the notations $p_\sigma=p_i$ and $q_\sigma=q_i$ where $i=\dim \sigma$. 

We consider the probability space ${\Omega_n^r}$ consisting of all subcomplexes 
$$Y\subset \Delta_n, \quad \mbox{with}\quad  \dim Y\le r.$$
Recall that the symbol $\Delta_n^{(r)}$ stands for the $r$-dimensional skeleton of $\Delta_n$, which is defined as the union of all simplexes of dimension $\leq r$. Thus, 
our probability space ${\Omega_n^r}$ consists of all subcomplexes $Y\subset \Delta_n^{(r)}$. 
The probability function
\begin{eqnarray}
\PP_{r,\p}: {\Omega_n^r}\to \R
\end{eqnarray} is given by the formula
\begin{eqnarray}\label{def1}
\PP_{r, \p}(Y) \, &=&\,  \prod_{\sigma\in F(Y)} p_\sigma \cdot \prod_{\sigma\in E(Y)} q_\sigma \nonumber\\  \\
&=& \, \prod_{i=0}^r p_i^{f_i(Y)}\cdot \prod_{i=0}^r q_i^{e_i(Y)}\nonumber
\end{eqnarray}
In (\ref{def1}) we use the convention $0^0=1$; in other words, if $p_i=0$ and $f_i(Y)=0$ then the corresponding factor in (\ref{def1}) equals 1; similarly if some $q_i=0$ and $e_i(Y)=0$. 

We shall show below that $\PP_{r, \p}$ is indeed a probability function, i.e. 
\begin{eqnarray}\label{sum1}
\sum_{Y\subset \Delta_n^{(r)}}\PP_{r, \p}(Y) = 1, 
\end{eqnarray}
see Corollary \ref{prob}. 
%Here $f_i(Y)$ denotes the number of $i$-faces in $Y$ and $e_i(Y)$ denotes the number of external $i$-faces in $Y$. 

If $p_i=0$ for some $i$ then according to (\ref{def1}) we shall have $\PP_{r, \p}(Y)=0$ unless $f_i(Y)=0$, i.e. if $Y$ contains no simplexes of dimension $i$ (in this case $Y$ contains no simplexes
of dimension $\ge i$). Thus, if $p_i=0$ the probability measure $\PP_{r, \p}$ is supported on the set of subcomplexes of $\Delta_n$ of dimension $<i$. 

In the special case when one of the probability parameters satisfies $p_i=1$ one has $q_i=0$ and from formula (\ref{def1}) we see $\PP_{r, \p}(Y)=0$ unless $e_i(Y)=0$, i.e. if the subcomplex $Y\subset \Delta_n^{(r)}$ has no external faces of dimension $i$. In other words, we may say that if $p_i=1$ the measure $\PP_r$ is concentrated on the set of complexes satisfying $e_i(Y)=0$, i.e. such that any boundary of the $i$-simplex in $Y$ is filled by an $i$-simplex of $Y$.  

\begin{lemma}\label{cont}
Let $$A\subset B\subset \Delta_n^{(r)}$$ be two subcomplexes satisfying the following condition: the boundary of any external face of $B$ of dimension $\le r$ 
is contained in $A$. Then 
\begin{eqnarray}\label{twosided}
\PP_{r, \p}(A\subset Y\subset B) &=& \prod_{\sigma\in F(A)} p_\sigma \cdot \prod_{\sigma\in E(B)}q_\sigma \nonumber\\  \\
&=& \prod_{i=0}^r p_i^{f_i(A)} \cdot \prod_{i=0}^r q_i^{e_i(B)}.\nonumber
\end{eqnarray}
\end{lemma}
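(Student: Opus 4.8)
The plan is to rewrite $\PP_{r,\p}(A\subset Y\subset B)$ as a single constant factor times a ``relative'' sum that I will then show equals $1$. First I would fix a $Y$ with $A\subset Y\subset B$ and sort its external faces of dimension $\le r$ into two kinds: those lying in $F(B)$ (call them \emph{internal}) and those not lying in $B$ (call them \emph{outer}). The key observation is that the outer external faces of $Y$ are \emph{exactly} the external faces of $B$ of dimension $\le r$, independently of $Y$. Indeed, if $\tau\in E(B)$ with $\dim\tau\le r$, then $\tau\notin B\supset Y$, and the hypothesis gives $\partial\tau\subset A\subset Y$, so $\tau$ is external to $Y$; conversely any external face of $Y$ not lying in $B$ has its boundary in $Y\subset B$ and hence belongs to $E(B)$. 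This is precisely the step that uses the hypothesis on the pair $(A,B)$, and it is the crux of the argument.

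Granting this, every admissible $Y$ contributes the same factor $\prod_{\sigma\in F(A)}p_\sigma\cdot\prod_{\sigma\in E(B)}q_\sigma$, coming from the forced faces of $A$ and from the outer external faces, so that
\[
\PP_{r,\p}(A\subset Y\subset B)=\left(\prod_{\sigma\in F(A)}p_\sigma\cdot\prod_{\sigma\in E(B)}q_\sigma\right)\cdot R(A,B),
\]
where $R(A,B)=\sum_{A\subset Y\subset B}\prod_{\sigma\in T(Y)}p_\sigma\cdot\prod_{\tau\in X(Y)}q_\tau$, with $T(Y)=F(Y)\setminus F(A)$ the new faces of $Y$ and $X(Y)$ the internal external faces of $Y$ (those $\tau\in F(B)\setminus F(Y)$ with $\partial\tau\subset Y$ and $\dim\tau\le r$). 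Since the right-hand side of the asserted formula is exactly the bracketed factor, it now suffices to prove $R(A,B)=1$.

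I would prove $R(A,B)=1$ by induction on the number $|F(B)\setminus F(A)|$ of free faces. When $A=B$ the sum has the single term $Y=A$ with empty products, giving $1$. For the inductive step I choose a maximal simplex $\sigma_0$ of $B$ with $\sigma_0\notin A$ (one exists, for otherwise $A$, being closed under faces, would contain every facet of $B$ and hence all of $B$), set $B'=B\setminus\{\sigma_0\}$, and split the sum according to whether $\sigma_0\in Y$. Because $\sigma_0$ is maximal in $B$, it is a proper face of no simplex of $B$, so inserting or deleting $\sigma_0$ changes neither the boundary-containment conditions defining $X$ for the other faces nor the subcomplex property; tracking the single extra factor then shows that, relative to the terms of $R(A,B')$, the terms with $\sigma_0\in Y$ carry a factor $p_{\dim\sigma_0}$ while those with $\sigma_0\notin Y$ and $\partial\sigma_0\subset Y$ carry $q_{\dim\sigma_0}$. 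Grouping these paired terms and using $p_{\dim\sigma_0}+q_{\dim\sigma_0}=1$ collapses the expression to $R(A,B')$, which equals $1$ by induction.

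The main obstacle is that the naive induction, namely directly removing a facet inside the statement of the lemma, fails: deleting $\sigma_0$ turns it into a new external face of $B'$ whose boundary need not lie in $A$, so the hypothesis is \emph{not} inherited by the pair $(A,B')$. The remedy above is to first peel off, once and for all, the constant contribution of the outer external faces, which is the only place the hypothesis is needed, and only afterwards to run the facet-removal induction on the hypothesis-free quantity $R(A,B)$, where the cancellation $p+q=1$ goes through cleanly. Specialising to $A=\emptyset$ and $B=\Delta_n^{(r)}$ then recovers $\sum_{Y}\PP_{r,\p}(Y)=1$, i.e.\ Corollary \ref{prob}.
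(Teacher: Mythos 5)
Your proof is correct, and it takes a genuinely different route from the paper's. The paper argues by induction on the dimension $r$: the base case $r=0$ is a direct binomial computation, and the inductive step factors the measure as $\PP_{r,\p}(Y)=\PP_{r-1,\p'}(Y^{r-1})\cdot q_r^{g_r(Y)}\cdot (p_r/q_r)^{f_r(Y)}$, sums over the admissible sets of top-dimensional faces via the binomial theorem, and uses the hypothesis on $(A,B)$ only through the counting identity $g_r(Y)-g_r^B(Y)=e_r(B)$. You instead isolate the hypothesis in a single combinatorial observation — for every $A\subset Y\subset B$ the external faces of $Y$ of dimension $\le r$ split as the faces of $E(B)$ (a set independent of $Y$, whose boundaries the hypothesis forces into $A\subset Y$) together with the internal ones $X(Y)\subset F(B)$ — and then prove the hypothesis-free normalization identity $R(A,B)=1$ by peeling off one facet at a time and cancelling with $p_{\sigma_0}+q_{\sigma_0}=1$. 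The delicate points all check out: the pairing $Y\leftrightarrow Y\setminus\{\sigma_0\}$ is legitimate because $\sigma_0$, being maximal in $B$, occurs in no boundary $\partial\tau$ with $\tau\in F(B)$, so the conditions defining $X$ for all other faces are unaffected by inserting or deleting $\sigma_0$; such a facet $\sigma_0\notin A$ exists whenever $F(B)\neq F(A)$; and your diagnosis of why the naive facet induction on the lemma itself fails (the pair $(A,\,B\setminus\{\sigma_0\})$ need not inherit the hypothesis, since $\partial\sigma_0$ need not lie in $A$) correctly identifies why the constant factor must be extracted before inducting. Comparing the two approaches: the paper's dimension induction mirrors the skeleton-by-skeleton construction of the measure but divides by $q_r$, which requires implicit care when some $p_i=1$; your facet-removal argument is more elementary, avoids division entirely (degenerate parameters $p_i\in\{0,1\}$ need only the convention $0^0=1$), makes transparent exactly where the hypothesis on $(A,B)$ enters, and specializes immediately to Corollary \ref{prob} by taking $A=\emptyset$, $B=\Delta_n^{(r)}$.
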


\begin{proof} We act by induction on $r$. 
For $r=0$, the complexes $A\subset B$ are discrete sets of vertices and the condition of the Lemma is automatically satisfied (since the boundary of any 0-face is the empty set). A subcomplex $Y\subset \Delta_n^{(0)}$ satisfying $A\subset Y\subset B$ is determined by a choice of $f_0(Y)-f_0(A)$ vertices out of $f_0(B)-f_0(A)$ vertices. Hence using formula (\ref{def1}), 
\begin{eqnarray*}
\PP_{0, \p}(A\subset Y\subset B) &=& \sum_{k=0}^{f_0(B)-f_0(A)} \binom {f_0(B)-f_0(A)} k \cdot p_0^{f_0(A)+k}q_0^{n-f_0(A)-k}\\
&=& p_0^{f_0(A)} \cdot q_0^{n-f_0(A)}\cdot \left(1+ \frac{p_0}{q_0}\right)^{f_0(B)-f_0(A)} \\
&=& p_0^{f_0(A)} \cdot q_0^{n-f_0(B)}\\
&=& p_0^{f_0(A)} \cdot q_0^{e_0(B)},
\end{eqnarray*}
as claimed. 

Now suppose that formula (\ref{twosided}) holds for $r-1$ and consider the case of $r$. Note the formula 
\begin{eqnarray}\label{ind}
\PP_{r, \p}(Y) = \PP_{r-1, \p'}(Y^{r-1}) \cdot q_r^{g_r(Y)}\cdot \left(\frac{p_r}{q_r}\right)^{f_r(Y)}
\end{eqnarray}
where $g_r(Y)=e_r(Y)+f_r(Y)$ is the number of boundaries of $r$-simplexes contained in $Y$ and $\p'=(p_0, \dots, p_{r-1})$. 
Note that the first two factors in (\ref{ind}) depend only on the skeleton $Y^{r-1}$. 

We denote by $g_r^B(Y)$ the number of $r$-simplexes of $B$ such that their boundary 
$\partial \Delta^r$ lies in $Y$. Clearly the number $g_r^B(Y)$ depends only on the skeleton $Y^{r-1}$. 
Our %\marginpar{this sentence is incorrect: the external faces of $B$ coincide with the set of external faces of $Y$ which do not bound in $B$} 
assumption that the boundary of any 
external $i$-face of $B$ is contained in $A$ for $i\le r$ implies that for any subcomplex $A\subset Y\subset B$ 
\begin{eqnarray}\label{indep}
g_r(Y)-g_r^B(Y) = e_r(B).
\end{eqnarray}

A complex $Y$ is uniquely determined by its skeleton $Y^{r-1}$ and by the set of its $r$-faces. Given the skeleton $Y^{r-1}$, the number $f_r(Y)$ is arbitrary satisfying 
$$f_r(A)\subset f_r(Y)\subset g_r^B(Y).$$
Thus using (\ref{ind}) we find that the probability  
\begin{eqnarray*}
\PP_{r,\p}(A\subset Y\subset B) = \sum_{A\subset Y\subset B} \PP_{r, \p}(Y) 
\end{eqnarray*}
equals
\begin{eqnarray*}
&\sum_{Y^{r-1}}& 
\PP_{r-1, \p'}(Y^{r-1})  \cdot q_r^{g_r(Y)}\cdot 
\sum_{k=0}^{g_r^B(Y)-f_r(A)} 
\binom {g_r^B(Y)-f_r(A)} k \cdot \left(\frac{p_r}{q_r}\right)^{f_r(A)+k} \\
%p_r^{f_r(A)+k}\cdot(1-p_r)^{g_r(Y)-f_r(A)-k } \\
&=& \sum_{Y^{r-1}} 
\PP_{r-1, \p'}(Y^{r-1})  \cdot q_r^{g_r(Y)}\cdot \left(\frac{p_r}{q_r}\right)^{f_r(A)}\cdot
\left(1+\frac{p_r}{q_r}\right)^{g_r^B(Y)-f_r(A)}\\
%p_r^{f_r(A)} \cdot (1-p_r)^{g_r(Y)-f_r(A)}\cdot \left(1+\frac{p_r}{1-p_r}\right)^{g_r^B(Y)-f_r(A)}\\
&=& \sum_{Y^{r-1}} 
\PP_{r-1, \p'}(Y^{r-1})  \cdot p_r^{f_r(A)} \cdot q_r^{g_r(Y)-g_r^B(Y)}\\
&=& p_r^{f_r(A)}\cdot q_r^{e_r(B))}\cdot \sum_{Y^{r-1}} \PP_{r-1, \p'}(Y^{r-1}).% \\
%&=& p_r^{f_r(A)}\cdot (1-p_r)^{g_r(B)-f_r(B)}\cdot \prod_{i=0}^{r-1} p_i^{f_i(A)} \cdot \prod_{i=0}^{r-1} (1-p_i)^{g_i(B)-f_i(B)}\\
%&=&\prod_{i=0}^{r} p_i^{f_i(A)} \cdot \prod_{i=0}^{r} (1-p_i)^{g_i(B)-f_i(B)}.
\end{eqnarray*}
Here we used the equation (\ref{indep}). 
Next we may combine the obtained equality with the inductive hypothesis 
$$\PP_{r-1, \p'}({A^{r-1}\subset Y^{r-1}\subset B^{r-1}}) = \prod_{i=0}^{r-1} p_i^{f_i(A)}\cdot \prod_{i=0}^{r-1}q_i^{e_u(B)}$$ 
to obtain
(\ref{twosided}). 
\end{proof}

The assumption that any external face of $B$ is an external face of $A$ is essential in Lemma \ref{cont}; the lemma is false without this assumption. 

Taking $B=\Delta_n^{(r)}$ in Lemma \ref{cont} we obtain:

 \begin{corollary}\label{cont2c}
Let $A\subset\Delta_n^{(r)}$ be a subcomplex. Then 
\begin{eqnarray}\label{twosided1}
\PP_{r, \p}(Y\supset A) \, =\, \sum_{Y\supset A} \PP_{r,\p}(Y) \, =\,  \prod_{\sigma\in F(A)} p_\sigma 
= \prod_{i=0}^r p_i^{f_i(A)}.
\end{eqnarray}
\end{corollary}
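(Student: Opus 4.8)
The plan is to obtain this corollary as the special case $B=\Delta_n^{(r)}$ of Lemma \ref{cont}, so the entire argument consists of checking that this substitution is legitimate and then simplifying the resulting formula.

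First I would observe that $B=\Delta_n^{(r)}$, being the full $r$-skeleton of $\Delta_n$, contains \emph{every} simplex of $\Delta_n$ of dimension at most $r$. Consequently $B$ has no external faces of dimension $i\le r$: such a face $\sigma$ would have to satisfy $\sigma\not\subset B$ with $\dim\sigma\le r$, which is impossible. Thus $e_i(B)=0$ for all $0\le i\le r$. This has two consequences. On the one hand, the hypothesis of Lemma \ref{cont} --- that the boundary of each external face of $B$ of dimension $\le r$ be contained in $A$ --- is vacuously satisfied, so the lemma applies (note also that $A\subset\Delta_n^{(r)}=B$). On the other hand, the factor $\prod_{\sigma\in E(B)}q_\sigma=\prod_{i=0}^r q_i^{e_i(B)}$ is an empty product and equals $1$.

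Next I would note that, by the very definition of the probability space $\Omega_n^r$, every complex $Y$ in it satisfies $Y\subset\Delta_n^{(r)}=B$. Hence the event $\{A\subset Y\subset B\}$ coincides with the event $\{Y\supset A\}$, and substituting $B=\Delta_n^{(r)}$ into (\ref{twosided}) while discarding the trivial $q$-factor yields
$$\PP_{r,\p}(Y\supset A)=\prod_{\sigma\in F(A)}p_\sigma=\prod_{i=0}^r p_i^{f_i(A)},$$
which is exactly (\ref{twosided1}).

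There is no genuine obstacle in this derivation, since all the substantive work was already carried out in the inductive proof of Lemma \ref{cont}. The only point deserving a moment's attention is the claim that $\Delta_n^{(r)}$ has no external faces in dimensions $\le r$, and this is immediate from the definition of an external face together with the definition of the $r$-skeleton.
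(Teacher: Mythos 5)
Your proposal is correct and is exactly the paper's own derivation: the paper obtains Corollary \ref{cont2c} by setting $B=\Delta_n^{(r)}$ in Lemma \ref{cont}, and you have simply spelled out the (correct) verifications that the paper leaves implicit, namely that $e_i(B)=0$ for $i\le r$ makes the hypothesis vacuous and the $q$-factor equal to $1$, and that $\{A\subset Y\subset B\}$ is the event $\{Y\supset A\}$.
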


Taking the special case $A=\emptyset$ in (\ref{twosided1}) we obtain the following Corollary confirming the fact that $\PP_{r,\p}$ is a probability function.
\begin{corollary}\label{prob}
$$\sum_{Y\subset \Delta_n^{(r)} }\PP_{r,\p}(Y) = 1. $$
\end{corollary}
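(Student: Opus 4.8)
The plan is to obtain Corollary \ref{prob} immediately from Corollary \ref{cont2c} by specializing to the empty subcomplex. First I would take $A=\emptyset$ in formula (\ref{twosided1}). Since $\emptyset$ is a subcomplex of $\Delta_n^{(r)}$ contained in every other subcomplex, the condition $Y\supset A$ becomes vacuous, so the summation on the left of (\ref{twosided1}) ranges over all subcomplexes $Y\subset \Delta_n^{(r)}$; that is, $\sum_{Y\supset\emptyset}\PP_{r,\p}(Y)=\sum_{Y\subset\Delta_n^{(r)}}\PP_{r,\p}(Y)$, which is exactly the sum appearing in the statement.

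Next I would evaluate the right-hand side of (\ref{twosided1}) at $A=\emptyset$. The empty complex has no faces, so $F(\emptyset)=\emptyset$ and the product $\prod_{\sigma\in F(\emptyset)}p_\sigma$ is an empty product, equal to $1$. Equivalently, $f_i(\emptyset)=0$ for every $i$, so $\prod_{i=0}^r p_i^{f_i(\emptyset)}=\prod_{i=0}^r p_i^0=1$ (here the convention $0^0=1$ handles any $p_i=0$). Combining the two evaluations yields $\sum_{Y\subset\Delta_n^{(r)}}\PP_{r,\p}(Y)=1$, as required.

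The only point needing care is to confirm that Corollary \ref{cont2c} legitimately applies when $A=\emptyset$, i.e.\ that the derivation of (\ref{twosided1}) from Lemma \ref{cont} with $B=\Delta_n^{(r)}$ remains valid in this case. But this is automatic: the hypothesis of Lemma \ref{cont} requires that the boundary of every external face of $B$ of dimension $\le r$ lie in $A$, and since $B=\Delta_n^{(r)}$ already contains all simplexes of dimension $\le r$, it has no external faces of dimension $\le r$ at all, so the hypothesis holds vacuously for any $A$, including $A=\emptyset$. Hence there is no genuine obstacle here; all the real content is carried by Lemma \ref{cont} and Corollary \ref{cont2c}, and the present statement is merely the normalization that identifies $\PP_{r,\p}$ as a bona fide probability function.
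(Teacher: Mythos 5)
Your proof is correct and follows exactly the paper's route: the authors likewise obtain Corollary \ref{prob} by setting $A=\emptyset$ in Corollary \ref{cont2c}, with the empty product on the right-hand side of (\ref{twosided1}) equal to $1$. Your additional check that the hypothesis of Lemma \ref{cont} holds vacuously for $B=\Delta_n^{(r)}$ (which has no external faces of dimension $\le r$) is a correct and welcome elaboration of a step the paper leaves implicit.
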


\subsection{The number of vertices of $Y$}

We start with the following example. 

\begin{example}\label{emptyset}
{\rm
According to formula (\ref{def1}) the probability of the empty subcomplex $Y=\emptyset$ equals
$$\PP_{r,\p}(Y=\emptyset) = (1-p_0)^n.$$

If $p_0\to 0$ then 
$\PP_{r,\p}(Y=\emptyset) = (1-p_0)^n \sim e^{-p_0n}.$
Hence, we see that if $np_0\to 0$ then $\PP_{r,\p}(Y=\emptyset)\to 1$; we may say that in this case $Y=\emptyset$, a.a.s.

If $p_0=c/n$ then 
$$\PP_{r,\p}(Y=\emptyset) = (1-c/n)^n \to e^{-c}$$ as $n\to \infty$. 
Thus, for $p_0=c/n$ the empty subset appears with positive probability $\sim e^{-c}$, a.s.s.  

Since we intend to study non-empty large random simplicial complexes, we shall always assume that $p_0=\frac{\omega}{n}$ where $\omega$ tends to $\infty$. 
}
\end{example}

For $t\in \{0, 1, \dots, n\}$ denote by ${\Omega_{n,t}^r}$ the set of all subcomplexes $Y\subset \Delta_n^{(r)}$ with $f_0(Y)=t$. 
\begin{lemma} \label{tt} One has
\begin{eqnarray}
\sum_{Y\in \Omega_{n,t}^r} \PP_{r, \p}(Y) = \binom n t \cdot p_0^t\cdot q_0^{n-t}. \end{eqnarray} 
\end{lemma}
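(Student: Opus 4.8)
The plan is to condition on the vertex set of $Y$ and then invoke Lemma \ref{cont}. The event $\{f_0(Y)=t\}$ decomposes as a disjoint union, over all $t$-element subsets $V\subset\{1,\dots,n\}$, of the events ``the vertex set of $Y$ equals $V$''. There are $\binom n t$ such subsets, so it suffices to prove that each of these events carries probability exactly $p_0^t q_0^{n-t}$; the binomial coefficient then appears by summing over $V$.

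First I would fix a $t$-element subset $V$, let $A$ be the $0$-dimensional subcomplex consisting of the vertices of $V$, and let $B=\Delta_V^{(r)}$ be the $r$-skeleton of the full simplex spanned by $V$. The key observation is that a subcomplex $Y\subset\Delta_n^{(r)}$ has vertex set exactly $V$ if and only if $A\subset Y\subset B$: the left inclusion forces every vertex of $V$ into $Y$, while the right inclusion forbids any vertex outside $V$. Thus the event in question is precisely $\{A\subset Y\subset B\}$, whose probability Lemma \ref{cont} is designed to compute.

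The step requiring the most care is verifying the hypothesis of Lemma \ref{cont}, namely that the boundary of every external face of $B$ of dimension $\le r$ is contained in $A$. Here I would analyse the external faces of $B=\Delta_V^{(r)}$ inside $\Delta_n$. If $\sigma$ is an external face with $\dim\sigma\ge 1$, then $\partial\sigma\subset B$ forces every vertex of $\sigma$ to lie in $V$; since moreover $\sigma\not\subset B$, the dimension of $\sigma$ cannot be $\le r$, and since the facets of $\sigma$ must lie in the $r$-skeleton, its dimension cannot exceed $r+1$, so $\dim\sigma=r+1$. Consequently the only external faces of dimension $\le r$ are the isolated vertices $\{w\}$ with $w\notin V$, whose boundary is empty and hence trivially contained in $A$. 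This confirms the hypothesis.

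With the hypothesis verified, Lemma \ref{cont} gives $\PP_{r,\p}(A\subset Y\subset B)=\prod_{i=0}^r p_i^{f_i(A)}\cdot\prod_{i=0}^r q_i^{e_i(B)}$. Since $A$ consists only of vertices, $f_0(A)=t$ and $f_i(A)=0$ for $i\ge 1$, yielding the factor $p_0^t$. Since $B$ is the full $r$-skeleton on $V$, the analysis above shows $e_0(B)=n-t$ and $e_i(B)=0$ for $1\le i\le r$, yielding the factor $q_0^{n-t}$. Hence each of the $\binom n t$ events contributes $p_0^t q_0^{n-t}$, and summing over the choices of $V$ gives the stated formula. (As a sanity check, $t=0$ recovers $\PP_{r,\p}(Y=\emptyset)=q_0^n$ from Example \ref{emptyset}.)
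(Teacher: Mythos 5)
Your proof is correct and follows essentially the same route as the paper: partition the event $\{f_0(Y)=t\}$ according to the vertex set $V$, identify each piece with $\{A\subset Y\subset B\}$ where $B$ is the $r$-skeleton of the simplex on $V$, and apply Lemma \ref{cont} to get $p_0^t q_0^{n-t}$ for each of the $\binom n t$ choices. The only difference is that you spell out the verification of the hypothesis of Lemma \ref{cont} (identifying the external faces of $B$ as the vertices outside $V$ and the $(r+1)$-simplices on $V$, the latter being irrelevant), which the paper merely asserts.
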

\begin{proof}
For a subset $A\subset \{1, 2, \dots, n\}$ with $|A|=t$ denote by $B_A\subset \Delta_n$ the $r$-dimensional skeleton of the simplex spanned by $A$. 
The pair $A\subset B_A$ satisfies the condition of Lemma \ref{cont} and applying this lemma we obtain 
$$\PP_{r, \p}(A\subset Y\subset B_A) = p_0^tq_0^{n-t}.$$
Since we have $\binom n t$ choices for $A$ the result follows. 
\end{proof}

\begin{lemma} \label{vertices} Consider a random simplicial complex $Y\in {\Omega_n^r}$ with respect to the multi-parameter probability measure $\PP_{r, \p}$ where $\p=(p_0, p_1, \dots, p_r)$.  
Assume that $p_0=\omega/n$ where $\omega \to \infty$. Then
for any $0<\epsilon<1/2$ there exists an integer $N_\epsilon$ such that for all $n>N_\epsilon$ 
with probability 
$$\ge 1- 2e^{-\frac{1}{3}\omega^{{2\epsilon}}},$$ 
 the number of vertices $f_0(Y)$ of $Y$ satisfies the inequality
%\marginpar{This argument shows that $\delta$ can be also variable but such that $\delta^2\omega \to \infty$. For example, one take $\delta= \omega^{-1/3}$.}
\begin{eqnarray}
(1-\delta)\omega \le f_0(Y) \le (1+\delta)\omega,
\end{eqnarray}
where $$\delta=\omega^{-1/2 +\epsilon}.$$ 
\end{lemma}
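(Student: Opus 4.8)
The plan is to observe that Lemma \ref{tt} completely determines the distribution of the number of vertices. Summing $\PP_{r,\p}$ over all complexes with exactly $t$ vertices gives $\binom{n}{t} p_0^t q_0^{n-t}$, which is precisely the binomial distribution $\mathrm{Bin}(n, p_0)$. In other words, viewed as a random variable on $\Omega_n^r$, $f_0(Y)$ is distributed exactly as the number of successes in $n$ independent Bernoulli trials each of probability $p_0 = \omega/n$. This reduces the statement to a concentration inequality for a binomial variable about its mean $\E[f_0(Y)] = n p_0 = \omega$; note that the higher-dimensional parameters $p_1, \dots, p_r$ play no role whatsoever, since $f_0$ depends only on the vertex set.

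Next I would invoke the standard multiplicative Chernoff bounds: for $X \sim \mathrm{Bin}(n,p)$ with mean $\mu = np$ and $0 < \delta \le 1$, one has $\PP(X \ge (1+\delta)\mu) \le e^{-\delta^2\mu/3}$ and $\PP(X \le (1-\delta)\mu) \le e^{-\delta^2\mu/2}$. Taking $\mu = \omega$ and $\delta = \omega^{-1/2+\epsilon}$, the key computation is
$$\delta^2 \mu = \omega^{-1+2\epsilon}\cdot \omega = \omega^{2\epsilon}.$$
Hence each tail is bounded by $e^{-\omega^{2\epsilon}/3}$ (using $e^{-\delta^2\mu/2}\le e^{-\delta^2\mu/3}$ for the lower tail), and a union bound over the two tails gives
$$\PP\big(f_0(Y) < (1-\delta)\omega \ \text{or}\ f_0(Y) > (1+\delta)\omega\big) \le 2e^{-\omega^{2\epsilon}/3}.$$
Passing to the complementary event yields exactly the asserted probability $\ge 1 - 2e^{-\frac{1}{3}\omega^{2\epsilon}}$.

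The only genuine subtlety, and where $N_\epsilon$ enters, is the hypothesis $0 < \delta \le 1$ required by the Chernoff bounds. Since $\epsilon < 1/2$ we have $-1/2 + \epsilon < 0$, so $\delta = \omega^{-1/2+\epsilon} \to 0$ as $\omega \to \infty$; because $\omega \to \infty$ with $n$, there is an integer $N_\epsilon$ such that $\delta < 1$ for all $n > N_\epsilon$, and for such $n$ the bounds apply verbatim. I expect this bookkeeping — ensuring $\delta$ lies in the admissible range and that the Chernoff constants produce precisely the factor $1/3$ in the exponent — to be the main (and essentially only) technical point. Everything else is a direct substitution once the binomial law of $f_0(Y)$ is extracted from Lemma \ref{tt}.
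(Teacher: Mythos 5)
Your proposal is correct and follows essentially the same route as the paper: extract the binomial law of $f_0(Y)$ from Lemma \ref{tt}, note $\E f_0 = np_0 = \omega$, and apply a Chernoff bound with $\delta^2\omega = \omega^{2\epsilon}$, choosing $N_\epsilon$ so that $\delta$ lies in the admissible range. The only cosmetic difference is that the paper cites the two-sided bound of \cite{JLR} (Corollary 2.3, valid for $\delta \le 3/2$) in one step, whereas you assemble it from the two one-sided multiplicative bounds plus a union bound — the same inequality in substance.
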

\begin{proof} 
%Consider the number of vertices $f_0(Y)$ of a random complex $Y$ as a random variable, $f_0:$. 
%For a vertex $i\in \{1, \dots, n\}$, denote by $X_i: \Omega_{n}^r \to \R$ the random variable given by $X_i(Y) =1$ if $i\in Y$ and $X_i(Y)=0$ if $i\notin Y$ where $Y\in \Omega_n^r$. Then $f_0=\sum_i X_i$ and 
%by Lemma \ref{cont}, $\E(X_i)=p_0$. Hence, 
%$\E(f_0)=np_0=\omega$. 
By Lemma \ref{tt}, $f_0$ is a binomial random variable with $\E(f_0)=np_0=\omega$ 
and we may apply the Chernoff bound (see \cite{JLR}, Corollary 2.3 on page 27). Let $N_\epsilon$ be such that for all $n>N_\epsilon$ we have $\delta\leq 3/2$. Then

%The variance of $f_0$ equals $V(f_0)= \sum_{i,j}\E(X_iX_j) - \E(f_0)^2.$ By Lemma \ref{cont}, $\E(X_iX_j)= p_0^2$ for $i\not=j$ and $\E(X_iX_j)= p_0$ for $i=j$. 
%Thus, the variance of $f_0$ equals
%$$n(n-1)p_0^2 +np_0 -n^2p_0^2 = np_0(1-p_0)=\omega (1-p_0).$$
%Applying the Chebychev inequality 
%$$\PP_{r, \p}\{|f_0-\E(f_0)|\ge \epsilon\}\le V(f_0)/\alpha^2$$
%with $\alpha = \delta\omega$ 
%we obtain 
%$$\PP_{r, \p}\{(1-\delta)\omega \le f_0 \le (1+\delta)\omega \} \ge 1- \frac{(1-p_0)}{\delta^2\cdot \omega} \ge 1 - \frac{1}{\omega^{2\epsilon}} \, \to 1.$$
%\end{proof}%\marginpar{need to consider the case when $p_0\to 1$ separately}
%
%\begin{lemma}[Chernoff Bound]
%Consider a random simplicial complex $Y\in \Omega_n^r$ with respect to the multi-parameter measure  $\mathrm{P}_{r,\mathfrak{p}}$ where $\mathfrak{p}=(p_0,\ldots,p_r)$. Assume that for some $\omega=\omega(n)\to\infty$ the parameter $p_0$ satisfies $p_0=\omega/n$ and let $\delta=\omega^{-1/2+\epsilon}$ for some $0<\epsilon<1/2$. Then the number of vertices $f_0(Y)$ of $Y$ satisfies
%\begin{eqnarray}
%(1-\delta)\omega<f_0(Y)<(1+\delta)\omega
%\end{eqnarray}
%with probability at least 
%$$1-2e^{-\frac{\delta^2}{3}\omega}$$
%\end{lemma}
%
%%\begin{proof}
%The result follows from the Chernoff bounds for binomial random variables. We use the version described in Corollary 2.3 of \cite{} (Janson book). 
%
%Namely, for any $0<\delta\leq 3/2$ a binomially distributed random variable $X$ satisfies
$$\PP_{r, \p}(|f_0-\omega|\geq\delta \cdot\omega)<2\exp\left(-\frac{\delta^2}{3}\mathbb{E}f_0\right) = 2\exp\left(-\frac{1}{3} \omega^{2\epsilon}\right).$$
%The number of vertices $f_0(Y)$ of the random multi-parameter complex $Y$ has distribution $Bi(n,p_0)$.
This completes the proof.
\end{proof}

\begin{example}{\rm It is easy to see that a random complex $Y \in \Omega_n^r$ is zero-dimensional a.a.s. assuming that 
\begin{eqnarray}\label{dim0}
n^2 p_0^2p_1 \to 0.
\end{eqnarray}
Indeed using Lemma \ref{cont} one finds that the expected number of edges in $Y$ is 
$$\binom n 2\cdot {p_0}^2p_1$$
and the statement follows from the first moment method. 
}
\end{example}

\subsection{Important special cases.}

%The models of random simplicial complexes which were studied previously contained randomness in a single dimension while the multi-parameter model allows various probabilistic regimes in different dimensions simultaneously. 

The multi-parameter model we consider in this paper turns into some important well known models in special cases:

When $r=1$ and $\p=(1, p)$ we obtain the classical model of random graphs of Erd\"os and R\'enyi \cite{ER}. 

When $r=2$ and $\p= (1,1,p)$ we obtain the Linial - Meshulam model of random 2-complexes \cite{LM}. 

When $r$ is arbitrary and fixed and $\p = (1, 1, \dots, 1, p)$ we obtain the random simplicial complexes of Meshulam and Wallach \cite{MW}. 

For $r=n-1$ and $\p=(1,p, 1,1, \dots, 1)$ one obtains the clique complexes of random graphs studied in \cite{Kahle1}.

\subsection{Characterisation of the multi-parameter measure.}

In this subsection we show that the property of Corollary \ref{cont2c} is characteristic for the multi-parameter measure. 

\begin{lemma}\label{characterisation}
Let ${\mathbf P}$ be a probability measure on the set ${\Omega_n^r}$ of all $r$-dimensional subcomplexes of $\Delta_n$. Suppose that there exist real numbers $p_0, p_1, \dots, p_r\in [0,1]$ such that for any subcomplex 
$A\subset \Delta_n^{(r)}$ one has
\begin{eqnarray}
{\mathbf P}(Y\supset A) = \sum_{Y\supset A} {\mathbf P}(Y) = \prod_{i=0}^r p_i^{f_i(A)}.
\end{eqnarray}
Then ${\mathbf P}$ coincides with the measure $P_{r, \p}: {\Omega_n^r} \to \R$ given by formula (\ref{def1}) with the multi-parameter $\p=(p_0, p_1, \dots, p_r)$. 
\end{lemma}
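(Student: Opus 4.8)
The plan is to show that the family of ``upward'' marginals $A \mapsto \mathbf{P}(Y\supseteq A)$, indexed by all subcomplexes $A\subseteq \Delta_n^{(r)}$, determines the measure $\mathbf{P}$ uniquely. Once this is established the lemma is immediate: by Corollary \ref{cont2c} the measure $\PP_{r,\p}$ satisfies exactly the same marginal identity, $\PP_{r,\p}(Y\supseteq A)=\prod_{i=0}^r p_i^{f_i(A)}$ for every subcomplex $A$. Thus $\mathbf{P}$ and $\PP_{r,\p}$ have identical upward marginals, and uniqueness forces $\mathbf{P}=\PP_{r,\p}$.

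The uniqueness is a M\"obius inversion on the finite poset $\Omega_n^r$ ordered by inclusion. Abstractly, the linear operator carrying a function $\mu$ on $\Omega_n^r$ to its upward sums $A\mapsto \sum_{Y\supseteq A}\mu(Y)$ is given by the zeta matrix $\zeta(A,Y)=[A\subseteq Y]$ of this poset; relative to any linear extension of the inclusion order this matrix is triangular with unit diagonal, hence invertible, so $\mu$ is recovered from its upward sums. I prefer, however, to make the inversion explicit using external faces, which yields a transparent closed form.

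Fix a subcomplex $Y_0\subseteq \Delta_n^{(r)}$. The key observation, drawn from Subsection \ref{1.1}, is that a complex $Y$ with $Y\supseteq Y_0$ equals $Y_0$ if and only if $Y$ contains no external face of $Y_0$: any simplex of $Y$ not lying in $Y_0$ has a minimal such face, and that minimal face is external to $Y_0$. Writing $[\,\cdot\,]$ for the indicator and expanding $\prod_{\sigma\in E(Y_0)}(1-[\sigma\in Y])$ by inclusion--exclusion gives
\[
\mathbf{P}(Y=Y_0)=\sum_{S\subseteq E(Y_0)}(-1)^{|S|}\,\mathbf{P}\bigl(Y\supseteq Y_0\cup S\bigr).
\]
Here each $Y_0\cup S$ is again a subcomplex, since every external face $\sigma\in S$ has $\partial\sigma\subseteq Y_0$; hence the hypothesis applies to it and the right-hand side is expressed purely in terms of upward marginals. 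The identical identity holds with $\PP_{r,\p}$ in place of $\mathbf{P}$, and Corollary \ref{cont2c} shows the corresponding marginals agree term by term, whence $\mathbf{P}(Y=Y_0)=\PP_{r,\p}(Y=Y_0)$ for every $Y_0$.

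The argument carries no analytic content; the only points requiring care are combinatorial: verifying that the minimal simplices one may adjoin to $Y_0$ while remaining a subcomplex are exactly its external faces, and that $Y_0\cup S$ is a legitimate subcomplex so that the given marginal formula applies to it. Both follow directly from the definitions of Subsection \ref{1.1}, so the principal (and essentially only) step is the inclusion--exclusion bookkeeping displayed above.
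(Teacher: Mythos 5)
Your proof is correct and follows essentially the same route as the paper: both hinge on the observation that, among complexes $Y\supseteq Y_0$, equality $Y=Y_0$ fails exactly when $Y$ contains an external face of $Y_0$, followed by inclusion--exclusion over subsets $S\subseteq E(Y_0)$ of external faces (using that $\partial\sigma\subseteq Y_0$ makes $Y_0\cup S$ a subcomplex). The only cosmetic difference is that the paper evaluates the alternating sum in closed form, $\sum_{S}(-1)^{|S|}\prod_{\sigma\in S}p_\sigma=\prod_{\sigma\in E(Y_0)}(1-p_\sigma)$, recovering formula (\ref{def1}) directly, whereas you match the terms against those of $\PP_{r,\p}$ via Corollary \ref{prob}'s companion Corollary \ref{cont2c} and invoke uniqueness of a measure with given upward marginals.
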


\begin{proof}
Let $A\subset \Delta_n$ be a subcomplex. We want to show that 
$${\mathbf P}(A) = \prod_{i=0}^r p_i^{f_i(A)} \cdot \prod_{i=0}^r q_i^{e_i(A)} =P_{r,\p}(A), \quad \mbox{where}\quad q_i=1-p_i.$$
Let $E=E(A)$ denote the set of external faces of $A$. 
For each subset $S\subset E$ we denote by $A_S$ the simplicial complex $$A_S \, =\, A\cup\bigcup_{\sigma\in S}\sigma.$$
Here $S=\emptyset$ is also allowed and $A_\emptyset =A$. 
Then by our assumption concerning ${\mathbf P}$ we have $${\mathbf P}(Y\supset A_S) = {\mathbf P}(Y\supset A)\cdot \prod_{\sigma\in S}p_\sigma,$$ where $p_\sigma$ denotes $p_i$ with $i=\dim \sigma$. 

Clearly,
$$\{A\} = \{Y; Y\supset A\}- \bigcup_{\sigma\in E(A)} \{Y; Y\supset (A\cup \sigma)\}$$
and using the inclusion-exclusion formula we have (note that below $S$ runs over all subsets of $E$ including the empty set)
\begin{eqnarray*}{\mathbf P}(A) &= &%{\mathbf P}(Y\supset A) - 
\sum_{S\subset E} (-1)^{|S|} {\mathbf P}(Y\supset A_S)\\
&=& {\mathbf P}(Y\supset A) \cdot \sum_{S\subset E}(-1)^{|S|} \prod_{\sigma\in S} p_\sigma \\ &= &
{\mathbf P}(Y\supset A) \cdot \prod_{\sigma\in E}(1-p_\sigma) \\ &=& \prod_{i=0}^r p_i^{f_i(A)}\cdot \prod_{i=0}^r q_i^{e_i(A)} \\ &=& P_{r,\p}(A).
\end{eqnarray*}

\end{proof}

\section{Links in multi-parameter random simplicial complexes.}

In this section we show that links of simplexes in a multi-parameter random complex are also multi-parameter random simplicial complexes and we find the probability multi-parameters of the links.
We also study the intersections of the links and find their probability multi-parameters.

First we consider the link of a single vertex. 

%consider the following question. 
%Suppose that $Y\in {\Omega_n^r}$ is a multi-parameter random complex with respect to the measure $\PP_{r,\p}$, where $\p=(p_0, p_1, \dots, p_r)$. Is the link a vertex of $Y$ also a multi-parameter random complex and what is the corresponding multi-parameter? 

\begin{lemma}\label{link}
Let $Y\in {\Omega_n^r}$ be a multi-parameter random complex with respect to the measure $\PP_{r,\p}$, where $\p=(p_0, p_1, \dots, p_r)$.  Then the link of any vertex of $Y$ is a multi-parameter 
random complex $L\in \Omega_{n-1}^{r-1}$ with the multi-parameter $\p' = (p'_0, p'_1, \dots, p'_{r-1})$ where 
$$p'_i=p_ip_{i+1}, \quad \mbox{for} \quad i=0, 1, \dots, r-1.$$
\end{lemma}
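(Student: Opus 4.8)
The plan is to apply the characterisation of the multi-parameter measure in Lemma \ref{characterisation}. Fix a vertex $v$; since the link $L=\lk_Y(v)$ is only meaningful when $v$ is a vertex of $Y$, I would work with the conditional measure $\mathbf P(\,\cdot\mid v\in Y)$. The link lives on the remaining $n-1$ vertices, and if $\tau\in L$ then $\tau\cup\{v\}$ is a simplex of $Y$ of dimension $\dim\tau+1\le r$, so $\dim\tau\le r-1$; thus $L$ is a subcomplex of $\Delta_{n-1}^{(r-1)}$ and indeed $L\in\Omega_{n-1}^{r-1}$. By Lemma \ref{characterisation} it then suffices to verify that for every subcomplex $A\subset\Delta_{n-1}^{(r-1)}$ on the vertex set $\{1,\dots,n\}\setminus\{v\}$ one has
$$\mathbf P(L\supset A\mid v\in Y)=\prod_{i=0}^{r-1}(p_ip_{i+1})^{f_i(A)}.$$

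The first step is a purely combinatorial reformulation of the event $\{L\supset A\}$. Writing $v\ast A$ for the cone (join) of $v$ with $A$, I would check the set equality $\{L\supset A\}=\{Y\supset v\ast A\}$: a face $\sigma\in A$ lies in $L$ precisely when $\sigma\cup\{v\}\in Y$, and containment of all such cone faces forces $Y$ to contain $A$, the vertex $\{v\}$, and every $\sigma\cup\{v\}$, i.e.\ all of $v\ast A$. Corollary \ref{cont2c} then evaluates $\PP_{r,\p}(Y\supset v\ast A)=\prod_{i=0}^r p_i^{f_i(v\ast A)}$ exactly, reducing the problem to counting the faces of the cone.

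The key computation is the face-count of $v\ast A$. The $i$-faces of $v\ast A$ split into those not containing $v$, of which there are $f_i(A)$, and those of the form $\sigma\cup\{v\}$ with $\sigma$ an $(i-1)$-face of $A$, of which there are $f_{i-1}(A)$; with the conventions $f_{-1}(A)=1$ (accounting for the apex $\{v\}$) and $f_r(A)=0$ (as $\dim A\le r-1$), this gives $f_i(v\ast A)=f_i(A)+f_{i-1}(A)$. Substituting and reindexing the shifted product via $j=i-1$ produces
$$\prod_{i=0}^r p_i^{f_i(v\ast A)}=p_0\cdot\prod_{i=0}^{r-1}(p_ip_{i+1})^{f_i(A)},$$
where the stray factor $p_0$ is exactly $\PP_{r,\p}(Y\supset\{v\})=\PP_{r,\p}(v\in Y)$. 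Dividing by this normalisation yields the conditional probability in the required form, and Lemma \ref{characterisation} then identifies the distribution of $L$ as $\PP_{r-1,\p'}$ with $p'_i=p_ip_{i+1}$.

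The routine-but-delicate part is the bookkeeping in the reindexing: one must track the edge conventions $f_{-1}(A)=1$ and $f_r(A)=0$ so that the apex contributes the single factor $p_0$ which cancels against the conditioning, and so that the top index $i=r$ contributes nothing spurious. I do not anticipate a genuine obstacle; the only point requiring care is confirming that conditioning on $\{v\in Y\}$ is the correct normalisation and that it cancels cleanly, which the appearance of the isolated $p_0$ confirms.
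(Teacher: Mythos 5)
Your proposal is correct and is essentially the paper's own argument: the paper likewise normalises by $p_0^{-1}$ (your conditioning on $\{v\in Y\}$), identifies the event $\{L\supset A\}$ with $\{Y\supset CL\}$ for the cone $CL=v\ast A$, computes $f_i(CL)=f_i(L)+f_{i-1}(L)$ (with $f_0(CL)=f_0(L)+1$ and $f_r(L)=0$) via Corollary \ref{cont2c}, and concludes with Lemma \ref{characterisation}. Your bookkeeping with the conventions $f_{-1}(A)=1$, $f_r(A)=0$ and the cancelling factor $p_0$ matches the paper exactly.
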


\begin{proof} 
Let us assume that $Y$ contains the vertex $1\in \{1, \dots, n\}$. 
Then the link of $1$ in $Y$ is the union of all simplexes $(i_0, i_1, \dots, i_p)\subset \Delta_n$ such that $1<i_0<i_1< \dots<i_p\le n$ and the simplex $(1, i_0, i_1, \dots, i_p)$ is contained in $Y$. 

Let $\Delta'\subset \Delta_n$ denote the simplex spanned by $\{2, 3, \dots, n\}$. 
If $Y\subset \Delta_n$ is a subcomplex of dimension $\le r$ containing $1$, then the link of 1 in $Y$, denoted $L(Y)\subset \Delta'$, is a subcomplex of dimension $r-1$. 

We may define the following probability function on the set of all subcomplexes $L\subset \Delta'^{(r-1)}$: 
\begin{eqnarray}
{\mathbf P}(L) = p_0^{-1}\cdot \sum_{1\in Y \& L(Y)=L} \PP_{r, \p}(Y). 
\end{eqnarray}
The fact that $\mathbf P$ is a probability measure follows from Corollary \ref{cont2c} applied to the subcomplex $A=\{1\}$. 

We want to apply to the measure $\mathbf P$ the criterion of Lemma \ref{characterisation}. Hence we need to compute 
$${\mathbf P}(Z\supset L) = \sum_{Z\supset \Delta'} {\mathbf P}(Z)$$ where $Z\subset \Delta'$ runs over all subcomplexes of dimension $r-1$. Clearly we have 
\begin{eqnarray*}{\mathbf P}(Z\supset L) &=& p_0^{-1} \sum_{1\in Y \& L(Y)\supset L} \PP_{r, \p}(Y) \\ &=& p_0^{-1}\cdot \PP_{r,\p}(Y\supset CL) .\end{eqnarray*}
Here $CL\subset \Delta_n$ denotes the cone over $L$ with apex $1$. Applying Corollary \ref{cont2c} and observing that 
\begin{eqnarray*}f_i(CL) &=& f_i(L)+f_{i-1}(L), \quad i= 1, \dots, r,\\
f_0(CL)&=& f_0(L)+1,\end{eqnarray*}
we get (since $f_r(L)=0$)
$$\PP_{r,\p}(Y\supset CL) = p_0\cdot (p_0p_1)^{f_0(L)}\cdot (p_1p_2)^{f_1(L)}\cdots (p_{r-1}p_r)^{f_{r-1}(L)}.$$
Thus, 
$${\mathbf P}(Z\supset L) = \prod_{i\ge 0} \left(p_ip_{i+1}\right)^{f_i(L)}$$
and hence Lemma \ref{characterisation} implies that $L$ is a multi-parameter random complex with the multi-parameter $\p' = (p_0p_1, p_1p_2, \dots, p_{r-1}p_r).$
\end{proof}

Next we consider the general case. 

\begin{lemma}\label{linkk}
Let $Y\in {\Omega_n^r}$ be a multi-parameter random complex with respect to the measure $\PP_{r,\p}$, where $\p=(p_0, p_1, \dots, p_r)$.  
Then the link of any $k$-dimensional simplex of $Y$ (where $k<r$) is a multi-parameter 
random simplicial complex $L\in \Omega_{n-k-1, r-k-1}$ with the multi-parameter $$\p' = (p'_0, p'_1, \dots, p'_{r-k-1})$$
where
\begin{eqnarray}\label{haha}
p'_i=\prod_{j=i}^{i+k+1} p_j^{\binom {k+1}{j-i}}.
\end{eqnarray}
For example, for $k=1$ we have $$p'_i=p_ip_{i+1}^2p_{i+2},$$ and for $k=2$, $$p'_i= p_ip_{i+1}^3p_{i+2}^3p_{i+3}.$$ 
\end{lemma}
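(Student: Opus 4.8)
The plan is to follow the proof of Lemma \ref{link} almost verbatim, replacing the single vertex by the $k$-simplex $\sigma$ and the cone $CL$ by the join $\sigma\ast L$, and then to read off the exponents (\ref{haha}) from a purely combinatorial count of the faces of that join. Without loss of generality fix the $k$-simplex $\sigma$ spanned by $\{1,2,\dots,k+1\}$ and let $\Delta''\subset\Delta_n$ be the simplex spanned by the remaining vertices $\{k+2,\dots,n\}$. If $Y\supset\sigma$, then $L=\lk_Y(\sigma)$ is a subcomplex of the $(r-k-1)$-skeleton of $\Delta''$, and since $\Delta''$ has $n-k-1$ vertices this places $L$ in $\Omega_{n-k-1,r-k-1}$, as required. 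Mirroring Lemma \ref{link}, I would define a probability measure on such $L$ by
$${\mathbf P}(L) = c^{-1}\cdot \sum_{\sigma\subset Y\, \&\, \lk_Y(\sigma)=L} \PP_{r,\p}(Y), \qquad c=\PP_{r,\p}(Y\supset \sigma),$$
where Corollary \ref{cont2c} evaluates the normalising constant as $c=\prod_{i=0}^{k} p_i^{\binom{k+1}{i+1}}$ (since $\sigma$ has exactly $\binom{k+1}{i+1}$ faces of dimension $i$) and at the same time guarantees that $\mathbf P$ is a genuine probability measure.

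To invoke Lemma \ref{characterisation} I must compute ${\mathbf P}(Z\supset L)$ for subcomplexes $Z\subset\Delta''$. The key set-theoretic observation is that, given $\sigma\subset Y$, the condition $\lk_Y(\sigma)\supset L$ holds if and only if $Y\supset\sigma\ast L$, where $\sigma\ast L$ denotes the join, i.e.\ the subcomplex whose simplexes are the joins $\rho\ast\tau$ of a face $\rho\subset\sigma$ with a simplex $\tau\in L$; this is the exact analogue of the cone used in the case $k=0$, and because $\sigma\ast L\supset\sigma$ the requirement $\sigma\subset Y$ is automatically subsumed. Consequently
$${\mathbf P}(Z\supset L) = c^{-1}\cdot \PP_{r,\p}(Y\supset \sigma\ast L) = c^{-1}\cdot \prod_{m=0}^r p_m^{f_m(\sigma\ast L)},$$
the last step being Corollary \ref{cont2c} once more.

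The heart of the argument is the count of the faces of the join, and this is the step I expect to require the most care. Since $\sigma$ is a full $k$-simplex with $f_a(\sigma)=\binom{k+1}{a+1}$, the standard join count gives $f_m(\sigma\ast L)=\sum_b \binom{k+1}{m-b}\, f_b(L)$, where $b$ runs over $-1,0,1,\dots$ under the convention $f_{-1}(L)=1$. The term $b=-1$ reproduces exactly $f_m(\sigma)=\binom{k+1}{m+1}$, hence reconstitutes $c$; subtracting it and then interchanging the two products and reindexing by $i=b$ yields
$${\mathbf P}(Z\supset L) = \prod_{m=0}^r p_m^{\sum_{b\ge 0}\binom{k+1}{m-b} f_b(L)} = \prod_{i\ge 0}\Big(\prod_{m=i}^{i+k+1} p_m^{\binom{k+1}{m-i}}\Big)^{f_i(L)} = \prod_{i\ge 0} (p'_i)^{f_i(L)},$$
where the inner product is precisely the $p'_i$ of (\ref{haha}). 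Lemma \ref{characterisation} then identifies $L$ as the multi-parameter random complex with parameter $\p'$, completing the proof. The two points needing genuine attention are the bookkeeping in the join face-count and the justification of the equivalence $\lk_Y(\sigma)\supset L\Leftrightarrow Y\supset\sigma\ast L$; everything else is formally identical to the $k=0$ case already treated.

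Finally, I would note an alternative route by induction on $k$: writing $\sigma=v\ast\sigma'$ and using $\lk_Y(\sigma)=\lk_{\lk_Y(v)}(\sigma')$, one applies Lemma \ref{link} once to pass to the vertex-link parameter $p_ip_{i+1}$ and then feeds the inductive hypothesis into $\lk_Y(v)$. This reduces (\ref{haha}) to showing that the operator $(q_i)\mapsto (q_iq_{i+1})$ iterated $k+1$ times produces the binomial exponents $\binom{k+1}{j-i}$, which follows at once from Pascal's rule. The price of this shorter argument is that one must justify that the conditional distribution of $\lk_Y(v)$ may be re-fed into the lemma, so I would favour the direct computation above, which avoids any iterated conditioning.
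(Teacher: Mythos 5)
Your proposal is correct and follows essentially the same route as the paper's own proof: the same normalisation $c=\prod_{i=0}^{k}p_i^{\binom{k+1}{i+1}}$ via Corollary \ref{cont2c}, the same identification $\lk_Y(\sigma)\supset L\Leftrightarrow Y\supset\sigma\ast L$, the same join face-count (your $f_{-1}(L)=1$ convention packages exactly the paper's two cases $i\le k$ and $i>k$), and the same appeal to Lemma \ref{characterisation}. The inductive alternative you sketch at the end is a genuine variant, but since you rightly set it aside over the iterated-conditioning issue, the proof you actually give is the paper's.
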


\begin{proof} Let $\sigma_0\subset \Delta_n$ be a fixed $k$-dimensional simplex; without loss of generality we may assume that $\sigma_0=(1, 2, \dots, k+1)$. 
Consider the complexes $Y\in {\Omega_n^r}$ containing $\sigma_0$; for each of these complexes let $L(Y)$ denote the link of $\sigma_0$ in $Y$. 
Clearly $L(Y)$ is a subcomplex of the simplex $\Delta'$ spanned by the vertices $k+1, \dots, n$. Since $\dim L(Y)\le r-k-1$ we may view $L(Y)$ as an element of $\Omega_{n-k-1, r-k-1}$. 
Recall that the link of $\sigma_0$ in $Y$ is the union of all simplexes $(i_0, i_1, \dots, i_p)\subset \Delta'$ such that $k+1<i_0<i_1< \dots<i_p\le n$ and the simplex 
$(1, ,2, \dots, k+1, i_0, i_1, \dots, i_p)$ is contained in $Y$. 

As in the previous Lemma, define the following probability function on the set of all subcomplexes $L\subset \Delta'^{(r-k-1)}$: 
\begin{eqnarray}\label{probinduced}
{\mathbf P}(L) = \left[\prod_{i=0}^k p_i^{\binom {k+1}{i+1}}\right]^{-1} \cdot \sum_{\sigma_0\subset Y \& L(Y)=L} \PP_{r, \p}(Y). 
\end{eqnarray}
Here $Y$ runs over all subcomplexes $Y\in {\Omega_n^r}$ containing the simplex $\sigma_0$. The first factor normalises (\ref{probinduced}) and makes it a 
 probability measure as follows from Corollary \ref{cont2c} applied to the subcomplex $A=\sigma_0$. 

We want to apply to the measure $\mathbf P$ the criterion of Lemma \ref{characterisation} and we need to compute 
$${\mathbf P}(Z\supset L) = \sum_{Z\supset \Delta'} {\mathbf P}(Z)$$ 
where $Z\subset \Delta'$ runs over all subcomplexes of dimension $r-k-1$. Clearly we have 
\begin{eqnarray*}{\mathbf P}(Z\supset L) &=& \left[\prod_{i=0}^k p_i^{\binom {k+1}{i+1}}\right]^{-1} \cdot  
\sum_{\sigma_0\subset Y \& L(Y)\supset L} \PP_{r, \p}(Y) \\  \\ &=& \left[\prod_{i=0}^k p_i^{\binom {k+1}{i+1}}\right]^{-1} \cdot  \PP_{r,\p}(Y\supset \sigma_0\ast L) .\end{eqnarray*}
Here $\sigma_0\ast L\subset \Delta_n$ denotes the join of $\sigma_0$ and $L$. 
To compute the last factor we may apply Corollary \ref{cont2c}. 
Note that
\begin{eqnarray*}f_i(\sigma_0\ast L) &=& \sum_{j=0}^{k+1} \binom {k+1} j \cdot f_{i-j}(L), \quad \mbox{for} \quad i>k\\
f_i(\sigma_0\ast L) &=& \sum_{j=0}^{i} \binom {k+1} j \cdot f_{i-j}(L)+ \binom {k+1}{i+1},\quad \mbox{for} \quad i\le k.\end{eqnarray*}
Hence, 
we get (since $f_r(L)=0$)
$$\PP_{r,\p}(Y\supset \sigma_0\ast L) =  \prod_{i=0}^k p_i^{\binom {k+1}{i+1}} \cdot \prod_{i=0}^r \prod_{j=0}^{k+1} \left[p^{\binom {k+1} j}\right]^{f_{i-j}(L)}$$
Thus, substituting in the formula above we obtain
$${\mathbf P}(Z\supset L) = \prod_{i\ge 0}^{r-k-1} \left(p'_i\right)^{f_i(L)}$$
where the numbers $p'_i$ are given by (\ref{haha}). 
This completes the proof. 
\end{proof}

\begin{example}\label{ex33} {\rm 
Let $r=n-1$ and $\p=(1, p, 1, \dots, 1)$. Hence we consider clique complexes $Y$ of Erd\"os - R\'enyi random graphs with edge probability $p$. 
The link of a vertex of $Y$ has multi-parameter
$\p'=(p, p, 1, \dots, 1)$, i.e. it has two probability parameters. 
The link of an edge of $Y$ has probability multi-parameter $(p^2, p, 1, \dots, 1)$ and the link of a 2-simplex has  
probability multi-parameter $(p^3, p, 1, \dots, 1)$. Thus, links of simplexes in clique complexes of Erd\"os - R\'enyi random graphs are also clique complexes but the underlying random graphs are of slightly more general nature as they have a vertex probability parameter $\not=1$. 
 }
\end{example}

Recall that the degree of a $k$-dimensional simplex $\sigma$ in a simplicial complex $Y$ is defined as the number of $(k+1)$-dimensional simplexes containing 
$\sigma$. Clearly the degree of $\sigma$ in $Y$ coincides with the number of vertices in the link of $\sigma$ in $Y$. Hence, applying 
Lemma \ref{tt} in combination with Lemma \ref{linkk} we obtain:

\begin{corollary}
The degree of a vertex of a random complex $Y\in \Omega_n^r$ with respect to $\PP_{r, \p}$, where $\p=(p_0, p_1, \dots, p_r)$,  
has binomial distribution ${\it Bi}(n-1, p_0p_1)$. 
In other words, probability that a vertex of $Y$ has degree $k$ equals
$$\binom {n-1} k \cdot (p_0p_1)^k \cdot (1-p_0p_1)^{n-1-k}, $$
where  $k=0,1, \dots, n-1.$
More generally, 
the degree of a $k$-dimensional simplex $\sigma$ of a random complex $Y\in \Omega_n^r$ with respect to $\PP_{r, \p}$ 
has binomial distribution ${\it Bi}(n-k-1, p)$
where $$p=\prod_{i=0}^{k+1} p_i^{\binom {k+1} i} = p_0p_1^{k+1}p_2^{\binom {k+1} 2} \cdots p_k^{k+1} p_{k+1}.$$
In other words, probability that a $k$-simplex of $Y$ has degree $k$ equals
$$\binom {n-k-1} k \cdot p^k \cdot (1-p)^{n-1-k}, $$
where $k=0,1, \dots, n-k-1.$
\end{corollary}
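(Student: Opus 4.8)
The plan is to deduce everything from the two facts already proved: the identification of links as multi-parameter complexes (Lemma \ref{link} for vertices, Lemma \ref{linkk} in general) and the binomial vertex-count of Lemma \ref{tt}. The starting point is the observation recorded immediately before the statement, namely that the degree of a $k$-simplex $\sigma$ in $Y$ equals the number of vertices $f_0(L)$ of its link $L=\lk_Y(\sigma)$. Hence the whole statement reduces to determining the distribution of $f_0(L)$.

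First I would fix a $k$-simplex $\sigma$ and condition on the event $\sigma\subset Y$, which is the tacit hypothesis in speaking of the degree of $\sigma$. Under this conditioning Lemma \ref{linkk} asserts that $L$ is a multi-parameter random complex in $\Omega_{n-k-1, r-k-1}$ with multi-parameter $\p'=(p'_0, \dots, p'_{r-k-1})$ given by (\ref{haha}); indeed the normalising factor in the definition (\ref{probinduced}) of the induced measure is exactly $\PP_{r,\p}(Y\supset\sigma)$ by Corollary \ref{cont2c}, so that measure is literally the conditional law of $L$ given $\sigma\subset Y$. Setting $i=0$ in (\ref{haha}) gives the zeroth parameter
$$p'_0=\prod_{j=0}^{k+1}p_j^{\binom{k+1}{j}}=p_0p_1^{k+1}p_2^{\binom{k+1}{2}}\cdots p_k^{k+1}p_{k+1},$$
which is precisely the quantity $p$ appearing in the statement.

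Next I would apply Lemma \ref{tt} to the link complex $L$, replacing the ambient vertex number $n$ by $n-k-1$ and the vertex probability $p_0$ by $p'_0$. That lemma immediately yields
$$\PP\bigl(f_0(L)=t\bigr)=\binom{n-k-1}{t}(p'_0)^t(1-p'_0)^{n-k-1-t},$$
i.e. $f_0(L)$ has distribution ${\it Bi}(n-k-1, p'_0)={\it Bi}(n-k-1, p)$. Since $f_0(L)$ is the degree of $\sigma$, this is the claimed law, and the vertex assertion is the case $k=0$ with $p'_0=p_0p_1$ coming from Lemma \ref{link}. Because every ingredient is supplied by the earlier lemmas, there is no genuine obstacle here; the only point deserving a line of care is the conditional reading of ``the degree of a simplex $\sigma$ of $Y$'', which is justified by observing that the normalisation in (\ref{probinduced}) equals $\PP_{r,\p}(Y\supset\sigma)$, so that applying Lemma \ref{tt} to $L$ produces exactly the conditional distribution of the degree given $\sigma\subset Y$.
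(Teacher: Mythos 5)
Your proposal is correct and takes essentially the same route as the paper, which obtains the corollary precisely by noting that the degree of $\sigma$ equals the number of vertices of its link and then applying Lemma \ref{tt} (binomial vertex count, with $n$ replaced by $n-k-1$ and $p_0$ by $p'_0$) in combination with Lemma \ref{linkk} (which for $i=0$ gives $p'_0=\prod_{j=0}^{k+1}p_j^{\binom{k+1}{j}}=p$). Your additional observation that the normalising factor in (\ref{probinduced}) equals $\PP_{r,\p}(Y\supset\sigma)$ by Corollary \ref{cont2c}, so that the induced measure is exactly the conditional law of the link given $\sigma\subset Y$, correctly makes explicit a point the paper leaves tacit.
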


The following Corollary will be used later in this paper.

\begin{corollary} \label{degreezero} Assume that $p_0=\omega/n$ with $\omega\to \infty$ and 
\begin{eqnarray}
p_1^2\cdot p_2 \, \ge\,   \frac{2\log \omega + c}{\omega},
\end{eqnarray}
where $c$ is a constant. 
Then there exists $N$ (depending on the sequence $\omega$ and on $c$) 
such that for all $n>N$ the probability that a random complex $Y\in \Omega_n^r$ with respect to the measure $\PP_{r, \p}$ has an edge of degree zero 
is less than 
$$p_1\cdot e^{2-c}.$$
\end{corollary}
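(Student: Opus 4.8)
The plan is to use the first moment method to bound the probability that $Y$ contains an edge of degree zero. An edge $(ij)$ has degree zero in $Y$ precisely when $(ij)$ is a $1$-simplex of $Y$ but no $2$-simplex of $Y$ contains it; equivalently, the link of $(ij)$ in $Y$ has no vertices. Let $N_0$ denote the number of edges of degree zero in $Y$. By the union bound (first moment method), $\PP_{r,\p}(N_0 \ge 1) \le \E(N_0)$, so it suffices to estimate the expected number of degree-zero edges and show it is bounded by $p_1 e^{2-c}$ for all large $n$.

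To compute $\E(N_0)$, I would condition on the number of vertices $f_0(Y)=t$ and sum over ordered choices of which vertices are present and which pairs form edges of degree zero. First I would fix an edge $(ij)$ and compute the probability that it is present and has degree zero. By Lemma \ref{linkk} with $k=1$, the link of an edge is a multi-parameter random complex whose vertex parameter is $p_0'=p_0 p_1^2 p_2$ (the $i=0$ case of (\ref{haha})). Hence, given that the edge $(ij)$ lies in $Y$, the number of vertices in its link has binomial distribution, and by Example \ref{emptyset} applied to this link the probability that the link has no vertices on the remaining $n-2$ potential vertices is $(1-p_0 p_1^2 p_2)^{n-2}$. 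The probability that the edge $(ij)$ itself is present is $p_0^2 p_1$ by Corollary \ref{cont2c} applied to the edge as a subcomplex $A$. Multiplying, the probability that $(ij)$ is an edge of degree zero is $p_0^2 p_1 (1-p_0 p_1^2 p_2)^{n-2}$, and summing over the $\binom{n}{2}$ edges gives
\begin{eqnarray}
\E(N_0) = \binom{n}{2} \cdot p_0^2 p_1 \cdot (1-p_0 p_1^2 p_2)^{n-2}. \nonumber
\end{eqnarray}

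Now I would substitute $p_0=\omega/n$ and estimate. Using $\binom{n}{2} p_0^2 = \binom{n}{2} \omega^2/n^2 \to \omega^2/2$, and using the bound $1-x \le e^{-x}$ to write $(1-p_0 p_1^2 p_2)^{n-2} \le \exp(-(n-2) p_0 p_1^2 p_2) = \exp(-\frac{n-2}{n}\,\omega\, p_1^2 p_2)$, I expect
\begin{eqnarray}
\E(N_0) \;\lesssim\; \frac{\omega^2}{2}\, p_1 \cdot \exp\!\left(-\tfrac{n-2}{n}\,\omega\, p_1^2 p_2\right). \nonumber
\end{eqnarray}
Here the hypothesis $p_1^2 p_2 \ge (2\log\omega + c)/\omega$ enters: it makes $\omega\, p_1^2 p_2 \ge 2\log\omega + c$, so that $\exp(-\omega p_1^2 p_2) \le \omega^{-2} e^{-c}$. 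The factor $(n-2)/n \to 1$ produces an extra bounded correction that, for $n$ large, contributes a factor absorbed into the $e^2$ (this is the source of the exponent $2$ in $e^{2-c}$). Combining, $\E(N_0) \le p_1 e^{2-c}$ for all $n$ exceeding some threshold $N$ depending on $\omega$ and $c$, which is exactly the claimed bound.

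The main obstacle, and the step requiring the most care, is the passage from $\exp(-\frac{n-2}{n}\omega p_1^2 p_2)$ to the clean bound involving $e^{2-c}$: one must track the $(n-2)/n$ factor and the $\omega^2/2$ prefactor simultaneously and verify that choosing $N$ large enough forces the product below $p_1 e^{2-c}$. The delicate point is that $\omega \to \infty$ so $\omega^{-2}$ and $\log\omega$ compete, but the inequality is engineered precisely so that $\omega^2 \cdot \omega^{-2} = 1$ survives and only the constant $e^{2-c}$ (together with the harmless $1/2$) remains. I would also note that applying Lemma \ref{linkk} legitimately requires $k=1<r$, so implicitly $r\ge 2$; for $r<2$ there are no $2$-simplexes and every present edge trivially has degree zero, a degenerate case the hypotheses implicitly exclude through the requirement that $p_2$ be meaningful.
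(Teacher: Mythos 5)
Your proposal is correct and follows essentially the same route as the paper's proof: both identify a degree-zero edge with an edge whose link is empty, use Lemma \ref{linkk} (case $k=1$, giving $p_0'=p_0p_1^2p_2$) together with Example \ref{emptyset} and Corollary \ref{cont2c} to get $\E(N_0)=\binom{n}{2}p_0^2p_1(1-p_0p_1^2p_2)^{n-2}$, and then apply the first moment method with $1-x\le e^{-x}$ and the hypothesis $\omega p_1^2p_2\ge 2\log\omega+c$ so that $\omega^2\cdot\omega^{-2}=1$ survives. The only difference is bookkeeping at the end: the paper tracks the $(n-2)/n$ correction explicitly as $\exp\left(\frac{4\log\omega}{n}\right)\cdot e^{2c/n}\le 2e^2$ (using $c<\omega\le n$), with the factor $2$ cancelling the $\frac{1}{2}$ from $\binom{n}{2}$, whereas you leave this absorption slightly informal, but the argument is the same.
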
 %\marginpar{this Corollary can be improved in two different ways}

\begin{proof} An edge of $Y$ has degree zero if and only if its link in $Y$ is the empty set $\emptyset$. Since the link of an edge has the 
multi-parameter $(p'_0, p'_1, \dots, p'_{r-2})$ where $$p'_i = p_ip_{i+1}^2p_{i+2}$$ (see Lemma \ref{linkk}) we may apply the result of Example \ref{emptyset} to obtain that the probability that a given edge of $Y$ has degree zero equals
$$(1-p_0p_1^2p_2)^{n-2}.$$
Hence the expectation of the number of the degree zero edges in $Y$ equals to
\begin{eqnarray*}
\binom n 2 \cdot p_0^2\cdot p_1\cdot \left( 1-p_0p_1^2p_2  \right)^{n-2} &\le& \frac{1}{2}\cdot \omega^2\cdot p_1\cdot \exp\left(-p_0p_1^2p_2(n-2)\right)\\
&\le & \frac{1}{2}\cdot p_1 \cdot \exp\left(2\log \omega - (2\log \omega +c)\cdot \frac{n-2}{n}\right)\\
&=& \frac{1}{2}\cdot p_1\cdot \left\{\exp\left(\frac{4\log \omega}{n}\right)\cdot e^{\frac{2c}{n}}\right\}\cdot e^{-c}. 
\end{eqnarray*}
The first factor in the figure brackets tends to $1$ and hence it is less than $2$ for large $n$. The second factor in the figure brackets is less than $e^2$ 
(since our assumptions imply $c<\omega\le n$). This complex the proof. 
\end{proof}

Next we consider the intersections of links of several vertices. 

\begin{lemma} \label{linksintersection}
Let $k<n$ be fixed integers and
let $Y\in {\Omega_n^r}$ be a random $r$-dimensional simplicial complex with probability multi-parameter $\p=(p_0, \dots, p_r)$. 
Consider the intersection $L(Y)$ of links of $k$ distinct vertices of $Y$. Then $L(Y)\in \Omega_{n-k}^{r-1}$ is a random simplicial complex with respect to the multi-parameter 
$\p'=(p'_0, \dots, p'_{r-1})$ where
\begin{eqnarray}\label{klinks}
p'_i =p_ip_{i+1}^k, \quad\mbox{for}\quad  i=0, \dots, r-1.\end{eqnarray}
\end{lemma}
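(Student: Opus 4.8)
The plan is to reuse the strategy of Lemmas \ref{link} and \ref{linkk}: construct the appropriate conditional measure on subcomplexes of a smaller simplex and then verify the hypothesis of the characterisation Lemma \ref{characterisation}. Without loss of generality take the $k$ vertices to be $1, 2, \dots, k$, and let $\Delta'$ denote the simplex spanned by $\{k+1, \dots, n\}$. If a simplex $\tau\subset \Delta'$ lies in the link of each of $1, \dots, k$, then each cone $j\ast \tau$ is a simplex of $Y$ of dimension $\dim\tau+1\le r$, so $\dim\tau\le r-1$ and the intersection $L(Y)=\bigcap_{j=1}^{k}\lk_Y(j)$ is indeed an element of $\Omega_{n-k}^{r-1}$. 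On the set of subcomplexes $L\subset \Delta'^{(r-1)}$ I define
$${\mathbf P}(L) = p_0^{-k} \cdot \sum_{\{1,\dots,k\}\subset Y \,\&\, L(Y)=L} \PP_{r,\p}(Y),$$
where $Y$ ranges over complexes in ${\Omega_n^r}$ containing all of $1, \dots, k$; Corollary \ref{cont2c} applied to $A=\{1,\dots,k\}$ gives $\PP_{r,\p}(Y\supset A)=p_0^{k}$, which shows that ${\mathbf P}$ is a probability measure.

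Next I would compute the upward probabilities ${\mathbf P}(Z\supset L)$. Summing the defining formula over all $Z\supset L$ turns the condition $L(Y)=L$ into $L(Y)\supset L$, and the crucial point is that the event ``$Y$ contains $1,\dots,k$ and $L(Y)\supset L$'' is exactly the event $Y\supset W$, where
$$W = \bigcup_{j=1}^{k} \, (j\ast L)$$
is the union of the $k$ cones over $L$ with apexes $1, \dots, k$. Applying Corollary \ref{cont2c} to $W$ then gives
$${\mathbf P}(Z\supset L) = p_0^{-k}\cdot \PP_{r,\p}(Y\supset W) = p_0^{-k}\cdot \prod_{i=0}^{r} p_i^{f_i(W)}.$$

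The heart of the argument is the face count of $W$, and this is the step I expect to require the most care. The key structural observation is that, \emph{because we intersect the links rather than form a single joint cone}, every simplex of $W$ is a face of some individual cone $j\ast L$, and hence contains at most one of the apexes $1,\dots,k$. Consequently the faces of $W$ split cleanly into faces of $L$ and cones $j\ast\tau$ over faces $\tau$ of $L$, with no overcounting (a cone is distinguished by its unique apex in $\{1,\dots,k\}$), yielding
$$f_0(W) = f_0(L) + k, \qquad f_i(W) = f_i(L) + k\, f_{i-1}(L) \quad (i\ge 1).$$
Substituting this into the formula above, cancelling the factor $p_0^{k}$, using $f_r(L)=0$ and reindexing the shifted product, I obtain
$${\mathbf P}(Z\supset L) = \prod_{i=0}^{r-1} p_i^{f_i(L)} \cdot \prod_{i=0}^{r-1} p_{i+1}^{k f_i(L)} = \prod_{i=0}^{r-1}\left(p_i\, p_{i+1}^{k}\right)^{f_i(L)}.$$
By Lemma \ref{characterisation} this identifies ${\mathbf P}$ as the multi-parameter measure with $p'_i = p_i p_{i+1}^{k}$, as claimed, and the specialisation $k=1$ recovers Lemma \ref{link}. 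The only genuine subtlety, compared with Lemma \ref{linkk}, lies precisely in confirming that no simplex of $W$ spans two apexes and that the vertices $1,\dots,k$ are counted correctly (the $\tau=\emptyset$ contribution to the cones); the disjointness of the apex index range makes both transparent.
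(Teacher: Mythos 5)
Your proof is correct and takes essentially the same route as the paper's: the paper likewise normalises by $p_0^{-k}$ via Corollary \ref{cont2c}, identifies the event that $Y$ contains $1,\dots,k$ with $L(Y)\supset L$ as the event $Y\supset J$, where $J=\{1,\dots,k\}\ast L$ is precisely your union $W$ of $k$ cones over $L$ (the join of $L$ with the discrete set of $k$ apexes), uses the same face counts $f_0(J)=f_0(L)+k$ and $f_i(J)=f_i(L)+k\,f_{i-1}(L)$ for $i\ge 1$, and concludes with Lemma \ref{characterisation}. The only difference is cosmetic: the paper packages your apex-disjointness observation into the single phrase that $J$ is the join with a $0$-dimensional complex, while you verify the no-overcounting point explicitly.
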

\begin{proof}
Consider the set $\Omega'$ of simplicial complexes $Y\in {\Omega_n^r}$ containing the vertices $\{1, \dots, k\}$; the function $Y\mapsto p_0^{-k}\PP_{r, n}(Y)$ is a probability measure on $\Omega'$
(by Corollary \ref{cont2c}). 
For $Y\in \Omega'$ let $L_i(Y)$ denote the link of the vertex $i$ in $Y$ where $i=1, \dots, k$. 
Let $\Delta'$ denote the simplex spanned by the remaining vertices $k+1, k+2, \dots, n$. The intersection $L(Y)=L_1(Y)\cap \dots\cap L_k(Y)$ is a subcomplex of $\Delta'$ of dimension $\le r-1$. We obtain a map 
$$\Omega' \to \Omega_{n-k}^{r-1}, \quad Y\mapsto L(Y)$$
and we wish to describe the pushforward measure $\mathbf P$ on $\Omega_{n-k}^{r-1}$ which (by the definition) is given by ${\mathbf P}(Z)=p_0^{-k}\cdot \sum_{L(Y)=Z} \PP_{r, \p}(Y).$

Given a subcomplex $L\subset \Delta'$, consider the quantity 
$${\mathbf P}(Z\supset L) = \sum_{L\subset Z\subset \Delta'} {\mathbf P}(Z);$$
in the sum $Z$ runs over all subcomplexes of $\Delta'$ satisfying $L\subset Z\subset \Delta'$, $\dim Z\le r-1$. By the construction of ${\mathbf P}$, we have 
$${\mathbf P}(Z\supset L) = p_0^{k} \cdot \sum_{L(Y)\supset L} \PP_{r, \p}(Y);$$
here $Y$ runs over all subcomplexes $Y\subset \Delta_n^{(r)}$ containing the vertices $1, \dots, k$ and such that $L(Y)\supset L$. These last two conditions can be expressed by saying that 
$Y$ contains the join 
$$J=\{1, 2, \dots, k\}\ast L$$ 
as a subcomplex. Note that $J$ is the union of $k$ cones over $L$ with vertices at the points $1, \dots, k$. One has 
\begin{eqnarray*}
f_i(J) &=& f_i(L)+kf_{i-1}(L), \quad \mbox{for}\quad i>0, \\
f_0(J) &=& f_0(L) +k.
\end{eqnarray*}
Thus, using Corollary \ref{cont2c} we obtain
\begin{eqnarray*}
{\mathbf P}(Z\supset L) &=& p_0^{-k} \cdot \sum_{J\subset Y} \PP_{r, \p}(Y) \\
&=& p_0^{-k}\cdot \prod_{i=0}^r p_i^{f_i(J)}\\
&=& (p_0p_1^k)^{f_0(L)}\cdots (p_{r-1}p_r^k)^{f_{r-1}(L)}.
\end{eqnarray*}
Finally we apply Lemma \ref{characterisation} which implies that $\mathbf P$ is a multi-parameter probability measure on $\Omega_{n-k, r-1}$ with respect to the multi-parameter (\ref{klinks}). 

\end{proof}

\section{Intersections of random complexes}

In this section we show that intersection of multi-parameter random simplicial complexes is also a multi-parameter random simplicial complex with respect to the product of multi-parameters. 

Let $Y, Y'\in {\Omega_n^r}$ be two simplicial subcomplexes of $\Delta_n^{(r)}$. Suppose that both $Y, Y'$ are random and that their probability measures are $\PP_{r, \p}$ and $\PP_{r, \p'}$, correspondingly, see (\ref{def1}). Here $\p = (p_0, \dots, p_r)$ and $\p'=(p'_0, \dots, p'_r)$ are the corresponding probability multi-parameters. 
The intersection $$Z=Y\cap Y'\, \in {\Omega_n^r}$$ 
appears with probability 
\begin{eqnarray}\label{intersection}
{\mathbf P}(Z) = \sum_{Y\cap Y'=Z} {\PP}_{r, \p}(Y)\cdot {\PP}_{r, \p'}(Y').
\end{eqnarray}
This measure $\mathbf P$ is the pushforward of the product measure $\PP_{n,\p}\times \PP_{n, \p'}$ under the map 
$${\Omega_n^r}\times {\Omega_n^r} \to {\Omega_n^r}, \quad (Y, Y')\mapsto Y\cap Y'.$$

\begin{lemma} For $Z\in {\Omega_n^r}$ one has 
$${\mathbf P}(Z) = \PP_{r, \p\p'}(Z),$$
where $\p\p'=(p_0p'_0, p_1p'_1, \dots, p_rp'_r)$. 
\end{lemma}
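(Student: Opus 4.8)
The plan is to verify the defining hypothesis of Lemma \ref{characterisation} for the pushforward measure $\mathbf{P}$ with the candidate multi-parameter $\p\p'=(p_0p'_0, \dots, p_rp'_r)$. That is, for an arbitrary subcomplex $A\subset \Delta_n^{(r)}$ I would compute the probability $\mathbf{P}(Z\supset A)$ and show that it equals $\prod_{i=0}^r (p_ip'_i)^{f_i(A)}$; the characterisation lemma then identifies $\mathbf{P}$ with $\PP_{r, \p\p'}$ at once, bypassing any explicit evaluation of the sum in (\ref{intersection}).

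The first and key step is the elementary set-theoretic observation that for $Z=Y\cap Y'$ one has $Z\supset A$ if and only if $A\subset Y$ and $A\subset Y'$ simultaneously. Consequently the event $\{Z\supset A\}$ pulls back, under the intersection map $\Omega_n^r\times \Omega_n^r\to \Omega_n^r$, to exactly the product of the events $\{Y\supset A\}$ and $\{Y'\supset A\}$ in the product space.

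Next I would use the fact that $\mathbf{P}$ is the pushforward of the product measure $\PP_{r, \p}\times \PP_{r, \p'}$, under which $Y$ and $Y'$ are independent. Combining independence with the previous step yields
$$\mathbf{P}(Z\supset A) = \PP_{r, \p}(Y\supset A)\cdot \PP_{r, \p'}(Y'\supset A).$$
Applying Corollary \ref{cont2c} to each factor gives $\PP_{r, \p}(Y\supset A)=\prod_{i=0}^r p_i^{f_i(A)}$ and $\PP_{r, \p'}(Y'\supset A)=\prod_{i=0}^r (p'_i)^{f_i(A)}$, so that multiplying produces $\mathbf{P}(Z\supset A)=\prod_{i=0}^r (p_ip'_i)^{f_i(A)}$, precisely the quantity required by the criterion.

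I do not expect any real obstacle here: the argument is entirely driven by the characterisation lemma, and the only point to get right is the equivalence $Z\supset A$ if and only if both $Y\supset A$ and $Y'\supset A$, which is what lets independence factor the probability and replaces the unwieldy inclusion--exclusion a direct attack on (\ref{intersection}) would otherwise demand. The finiteness of all the sums and the $0^0=1$ convention cause no difficulty. Once the three ingredients --- the set-theoretic equivalence, the independence of $Y$ and $Y'$, and Corollary \ref{cont2c} --- are assembled, Lemma \ref{characterisation} closes the proof immediately.
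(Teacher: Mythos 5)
Your proposal is correct and follows essentially the same route as the paper: the paper's proof likewise computes $\mathbf{P}(Z\supset A)$, factors the sum over $\{Y\cap Y'\supset A\}$ into the product $\left[\sum_{Y\supset A}\PP_{r,\p}(Y)\right]\cdot\left[\sum_{Y'\supset A}\PP_{r,\p'}(Y')\right]$ using exactly your set-theoretic equivalence, evaluates each factor by Corollary \ref{cont2c}, and concludes via Lemma \ref{characterisation}. No gaps; your writeup matches the paper's argument step for step.
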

\begin{proof}
Let ${\mathbf P}$ be the probability measure on ${\Omega_n^r}$ given by (\ref{intersection}). To apply Lemma \ref{characterisation} we compute
\begin{eqnarray*}
{\mathbf P}(Z\supset A) &=& \sum_{Z\supset A} {\mathbf P}(Z) \\
&=& \sum_{Y\cap Y'\supset A} {\PP}_{r, \p}(Y)\cdot {\PP}_{r, \p'}(Y') \\
&=& \left[ \sum_{Y\supset A} \PP_{r, \p}(Y) \right]\cdot \left[ \sum_{Y'\supset A} \PP_{r, \p'}(Y')  \right]\\
&=& \prod_{i\ge 0} p_i^{f_i(A)} \cdot \prod_{i\ge 0} {p'_i}^{f_i(A)} \\
&=& \prod_{i\ge 0} (p_ip'_i)^{f_i(A)} .
\end{eqnarray*}
Now, Lemma \ref{characterisation} gives ${\mathbf P}(Z) = \PP_{r, \p\p'}(Z)$ for any $Z\in {\Omega_n^r}$. 

\end{proof}

Lemma \ref{intersection} suggests a way how a random simplicial complex $Y\in {\Omega_n^r}$ with general probability multi-parameter $\p=(p_0, p_1, \dots, p_r)$ can be generated. 
Consider the probability multi-parameter $$\p_i=(1, \dots, 1, p_i, 1, \dots, 1)$$ 
where $p_i$ occurs on the place with index $i$, where $i=0, 1, \dots, r$. 
Generate a random complex $Y_i\in {\Omega_n^r}$ with respect to the measure $\p_i$. Then the intersection $$Y=Y_0\cap Y_1\cap \dots \cap Y_r$$ is a random complex 
with respect to the original measure 
$\PP_{n, \p}$. Hence, $\PP_{n,\p}$ is the pushforward of the product measure $\PP_{r, \p_0}\times \PP_{r, \p_1}\times\cdots\times \PP_{r, \p_r}$ with respect to the map
$${\Omega_n^r}\times \cdots \times {\Omega_n^r} \to {\Omega_n^r}, \quad (Y_0, \dots, Y_r)\mapsto \bigcap_{i=0}^r Y_i.$$
Note that a random complex with respect to $\PP_{r, \p_i}$ has the following structure: 
(1) we start with the full $(i-1)$-dimensional skeleton $\Delta_n^{(i-1)}$ and (2) add $i$-dimensional simplexes at random, independently of each other, with probability $p_i$ (as in the Linial - Meshulam model), and then (3) we subsequently add all the external $j$-dimensional faces to the complex obtained on the previous step for $j=i+1, j+2, \dots, r$. 

\begin{example}{\rm
Consider the following construction. Start with a multi-para\-me\-ter random simplicial complex $Y$ with the multi-parameter $\p=(p_0, \dots, p_r)$. 
Let $\Delta'\subset \Delta_n$ denote the simplex 
spanned by the vertices $2, 3, \dots, n$. We claim that the intersection 
$$Y\cap \Delta'\in \Omega_{n-1}^{r}$$ is a multi-parameter random simplicial complex with the same multi-parameter $\p$. Indeed, 
the pushforward of the measure $\PP_{r, \p}$ under the map 
$$\Omega_{n}^{r} \to \Omega_{n-1}^{r}, \quad Y\mapsto Y\cap \Delta'$$
is $${\mathbf P}(Z) = \sum_{Y\cap \Delta'=Z} \PP_{r, \p}(Y).$$
For a subcomplex $A\subset \Delta'$ we have (using Corollary \ref{cont2c})
$${\mathbf P}(Z\supset A) = \sum_{Y\supset A} \PP_{r, \p}(Y) = \prod_{i\ge 0} p_i^{f_i(A)}.$$ 
Now the result follows from Lemma \ref{characterisation}. 

}\end{example}

\section{Isolated subcomplexes}\label{secisolated}

We shall say that a simplicial subcomplex $S\subset Y$ is {\it isolated} if 
no edge of $Y$ connects a vertex of $S$ with a vertex of $Y$ which is not in $S$. In other words, $S\subset Y$ is isolated if it is a union of several connected components of $Y$.

\begin{lemma}\label{lisolated}
Given a subcomplex $S\subset \Delta_n^{(r)}$, and let $Y\in {\Omega_n^r}$ be a random simplicial complex with respect to a multi-parameter $\p=(p_0, \dots, p_r)$. The probability that 
$Y$ contains $S$ as an isolated subcomplex equals 
\begin{eqnarray}\label{isolated}
\left[q_0 +p_0\cdot q_1^{f_0(S)}\right]^{n-f_0(S)}\cdot \prod_{i=0}^r p_i^{f_i(S)}.
\end{eqnarray}
\end{lemma}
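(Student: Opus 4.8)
The plan is to reduce the event to a product of independent contributions coming from the vertex set $T=V(S)$ of $S$ (with $t=f_0(S)$ vertices) and from the complementary set $U=\{1,\dots,n\}\setminus T$, and then to evaluate each contribution using Corollary \ref{cont2c} and Lemma \ref{tt}.

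First I would characterise the relevant complexes. The subcomplex $S$ is isolated in $Y$ precisely when $S\subseteq Y$ and no edge of $Y$ joins a vertex of $T$ to a vertex of $U$; since every simplex with vertices on both sides contains such an edge, this is the same as saying $Y$ has no simplex meeting both $T$ and $U$. Hence every admissible $Y$ splits uniquely as a disjoint union $Y=Y_T\cup Y_U$, where $Y_T=Y\cap\Delta_T^{(r)}$ satisfies $S\subseteq Y_T$ and $Y_U=Y\cap\Delta_U^{(r)}$ is arbitrary, with $\Delta_T,\Delta_U$ the simplices spanned by $T,U$.

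The main step, and the main obstacle, is to compute $\PP_{r,\p}(Y)$ in terms of $Y_T$ and $Y_U$ by bookkeeping the faces and external faces of $Y$. The faces split as $F(Y)=F(Y_T)\sqcup F(Y_U)$. For the external faces I would argue that $E(Y)$ consists of three disjoint parts: the positive-dimensional external faces of $Y_T$ inside $\Delta_T$; the external faces of $Y_U$ inside $\Delta_U$ (these include all external vertices, which lie in $U$ since $T\subseteq Y$); and the $t\cdot f_0(Y_U)$ \emph{mixed} edges joining $T$ to $V(Y_U)$, each of which is an external edge because its two endpoints lie in $Y$. The point that must be checked here is that no mixed simplex of dimension $\ge 2$ is an external face: such a simplex always has a facet meeting both $T$ and $U$, which is not in $Y$, so its boundary is not contained in $Y$. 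Using $e_0(Y_T)=0$ (all of $T$ lies in $Y$), this yields the factorisation
$$\PP_{r,\p}(Y)\;=\;\PP_{r,\p}^{\,T}(Y_T)\cdot\PP_{r,\p}^{\,U}(Y_U)\cdot q_1^{\,t\,f_0(Y_U)},$$
where $\PP_{r,\p}^{\,T}$ and $\PP_{r,\p}^{\,U}$ denote the multi-parameter measures (\ref{def1}) on the vertex sets $T$ and $U$.

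Finally I would sum over all admissible $Y$ by summing independently over $Y_T$ and $Y_U$. Summing $\PP_{r,\p}^{\,T}(Y_T)$ over $S\subseteq Y_T\subseteq\Delta_T^{(r)}$ gives $\prod_{i=0}^r p_i^{f_i(S)}$ by Corollary \ref{cont2c}. For the other factor, grouping the complexes $Y_U$ by their number of vertices and applying Lemma \ref{tt} (with $n$ replaced by $n-t$) shows that $f_0(Y_U)$ is binomial, so
$$\sum_{Y_U\subseteq\Delta_U^{(r)}}\PP_{r,\p}^{\,U}(Y_U)\,x^{f_0(Y_U)}\;=\;(q_0+p_0x)^{\,n-t};$$
evaluating at $x=q_1^{\,t}$ and multiplying the two factors produces exactly $\left[q_0+p_0q_1^{f_0(S)}\right]^{\,n-f_0(S)}\cdot\prod_{i=0}^r p_i^{f_i(S)}$, as claimed. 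The delicate part throughout is the external-face accounting in the middle step; once the factorisation is established the summation is routine.
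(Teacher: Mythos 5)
Your proof is correct, and it reaches the result by a different organisation of the same underlying bookkeeping than the paper does. The paper fixes the outside vertex set $K=V(Y)\setminus V(S)$ and, for each $K$, applies the sandwich Lemma \ref{cont} once to the pair $A_K=S\cup K\subset B_K=\Delta_S\cup\Delta_K$, whose external faces are exactly the outside vertices and the $|K|\cdot f_0(S)$ mixed edges; Lemma \ref{cont} then yields $\PP_{r,\p}(A_K\subset Y\subset B_K)=p_0^{|K|}\,q_0^{\,n-t-|K|}\,q_1^{\,|K|t}\prod_{i} p_i^{f_i(S)}$ (with $t=f_0(S)$) in one stroke, and a binomial sum over $|K|$ finishes. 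You instead work at the level of individual complexes: you prove the factorisation $\PP_{r,\p}(Y)=\PP^{T}_{r,\p}(Y_T)\cdot\PP^{U}_{r,\p}(Y_U)\cdot q_1^{\,t\,f_0(Y_U)}$, and your justification of it --- the only mixed external faces are the $t\,f_0(Y_U)$ edges from $T$ to $V(Y_U)$, because a mixed simplex of dimension $\ge 2$ has a mixed edge in its boundary and hence its boundary is not in $Y$, while a mixed edge to a vertex outside $V(Y_U)$ has a missing endpoint --- is precisely the verification implicit in the paper's assertion that $(A_K,B_K)$ satisfies the hypothesis of Lemma \ref{cont} with $e_1(B_K)=|K|\,f_0(S)$ and $e_i(B_K)=0$ for $i\ge 2$. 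You then sum the two factors separately, via Corollary \ref{cont2c} on the $T$-side and via Lemma \ref{tt} read as a generating function evaluated at the marker $x=q_1^{\,t}$ on the $U$-side; grouping your pairs $(Y_T,Y_U)$ by $K=V(Y_U)$ shows the two computations agree term by term. What your route buys is a sharper intermediate statement of independent interest --- conditioned on isolation, the piece of $Y$ on $V(S)$ and the rest of $Y$ are independent up to the explicit coupling factor $q_1^{\,t\,f_0(Y_U)}$ --- and it also sidesteps the small imprecision that $B_K$ as written is not a subcomplex of $\Delta_n^{(r)}$ (one should pass to its $r$-skeleton before invoking Lemma \ref{cont}); what the paper's route buys is brevity, since Lemma \ref{cont} absorbs in a single application the summation over all complexes sandwiched between $A_K$ and $B_K$ that you carry out by hand.
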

\begin{proof}
Let $K$ be a subset of $\{1, \dots, n\}-V(S)$ where $V(S)$ denotes the set of vertices of $S$. 
Denote $A_K=S\cup K$ and $B_K=\Delta_S \cup \Delta_K$, where 
$\Delta_S$ and $\Delta_K$ are the simplexes spanned by $V(S)$ and $K$ respectively. 
The pair $A_K\subset B_K$ satisfies the condition of Lemma \ref{cont}. Indeed, the external faces of $B_K$ are the vertices $i\notin V(S)\cup K$ and the edges connecting the elements of 
$K$ and the vertices of $S$; these external faces of $B_K$ are also external faces of $A_K$. 

Denoting $|K|=k$ we have 
\begin{eqnarray*}f_0(A_K) &=& f_0(S) +k,\\
f_i(A_K)&=&f_i(S), \quad i\ge 1,\\
e_0(B_K)&=&n-f_0(S) -k,\\
e_1(B_K)&=& k\cdot f_0(S),\\
e_i(B_K) &=& 0, \quad \mbox{for}\quad i\ge 2.\end{eqnarray*}
Therefore, applying Lemma \ref{cont} we find
\begin{eqnarray*}
\PP_{r, \p}(A_K\subset Y\subset B_K) &=& \prod_{i=0}^r p_i^{f_i(A_K)}\cdot \prod_{i=0}^r q_i^{e_i(B_K)}\\
&=& p_0^k \cdot q_0^{n-f_0(S)-k}\cdot q_1^{kf_0(S)}\cdot \prod_{i=0}^r p_i^{f_i(S)}\\
&=& \left[\frac{p_0q_1^{f_0(S)}}{q_0}\right]^k \cdot q_0^{n-f_0(S)}\cdot \prod_{i=0}^r p_i^{f_i(S)}.
\end{eqnarray*}

% \left[\frac{p_0(1-p_1)^{f_0(S)}}{1-p_0}\right]^k\cdot \prod_{i=0}^r p_i^{f_i(S)}\cdot
%(1-p_0)^{n-f_0(S))}.
Clearly, the probability that $Y$ contains $S$ as an isolated subcomplex equals the sum  
$$\sum_K \PP_{r, \p}(A_K\subset Y\subset B_K) $$ where $K$ runs over all subsets of $\{1, \dots, n\}-V(S)$. 
Hence we obtain that the desired probability equals
\begin{eqnarray*}
&&\sum_K \PP_{r, \p}(A_K\subset Y\subset B_K) \\ &=& q_0^{n-f_0(S)}\cdot \prod_{i=0}^r p_i^{f_i(S)}\cdot \sum_{k=0}^{n-f_0(S)} \binom n k \cdot \left[\frac{p_0q_1^{f_0(S)}}{q_0}\right]^k\\
&=& q_0^{n-f_0(S)}\cdot \prod_{i=0}^r p_i^{f_i(S)}\cdot \left[ 1+ \frac{p_0q_1^{f_0(S)}}{q_0}\right]^{n-f_0(S)}
\end{eqnarray*}
which is equivalent to formula (\ref{isolated}). 
\end{proof}

\begin{example}\label{vertex}{\rm 
Consider the special case when the complex $S\subset \Delta_n^{(r)}$ is a single point, $S=\{i\}$.
We obtain that the probability that $Y$ contains the vertex $\{i\}$ as an isolated point equals $$p_0(1-p_0p_1)^{n-1}.$$
}\end{example}

%Special case 2: when $S$ is an edge. Then the probability that $Y$ contains $S$ as an isolated edge is $$\left[1-p_0+p_0(1-p_1)^2\right]^{n-2}
%\cdot p_0^2p_1.$$
%
%Special case 3: 
\begin{example}\label{tree}{\rm 
Let $S_v\subset \Delta_n^{(r)}$ be a tree with $v$ vertices. 
Then the probability that $Y$ contains $S_v$ as an isolated subcomplex equals 
\begin{eqnarray}
\left[q_0+p_0\cdot q_1^v\right]^{n-v}\cdot p_0^v\cdot p_1^{v-1}.
\end{eqnarray}
}\end{example}
%\section{The existence of isolated vertices in random complexes}

We shall use the results of Examples \ref{vertex} and \ref{tree} to describe the range of the probability multi-parameter for which the random complex contains an isolated vertex, a.a.s.

\begin{lemma}\label{lm54}
Let $Y\in \Omega_n^r$ be a random complex  with respect to the probability measure $\PP_{r, \p}$, where $\p=(p_0, p_1, \dots, p_r)$. As above, we shall assume that 
$p_0=\omega/n$, where $\omega\to \infty$. Then: \newline
(A) If  
\begin{eqnarray}\label{minus} p_1=\frac{\log \omega - \omega_1}{\omega},\end{eqnarray}
for a sequence $\omega_1\to \infty$ then a random complex $Y\in \Omega_n^r$ contains an isolated vertex, a.a.s. In particular, under this condition a random complex $Y$ is disconnected, a.a.s. 
\newline
(B) If 
\begin{eqnarray}\label{plus} p_1=\frac{\log \omega + \omega_1}{\omega},\quad \omega_1\to \infty,\end{eqnarray}
then a random complex $Y\in\Omega_n^r$ contains no isolated vertexes, a.a.s.
\end{lemma}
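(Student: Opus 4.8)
The plan is to use the second moment method (specifically, the first moment method for part (B) and the second moment method for part (A)) applied to the random variable $X$ counting the number of isolated vertices of $Y$. By Example \ref{vertex}, the probability that a fixed vertex $i$ is an isolated vertex of $Y$ equals $p_0(1-p_0p_1)^{n-1}$, so by linearity of expectation
\begin{eqnarray*}
\E(X) = n\cdot p_0\cdot (1-p_0p_1)^{n-1} = \omega\cdot (1-p_0p_1)^{n-1}.
\end{eqnarray*}
First I would compute the asymptotics of $\E(X)$ in each regime. Writing $p_0p_1 = \omega p_1/n$ and using $(1-p_0p_1)^{n-1}\sim e^{-np_0p_1} = e^{-\omega p_1}$, I would substitute the hypotheses on $p_1$. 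In case (B), $\omega p_1 = \log\omega + \omega_1$, so $\E(X)\sim \omega\cdot e^{-\log\omega - \omega_1} = e^{-\omega_1}\to 0$; hence by Markov's inequality (the first moment method) $X=0$ a.a.s., which is exactly the assertion that $Y$ has no isolated vertices. This disposes of part (B) quickly.

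For part (A) the first moment gives $\E(X)\sim e^{\omega_1}\to\infty$, but a diverging expectation alone does not guarantee $X>0$ a.a.s.; I would therefore invoke the second moment method, showing $\mathrm{Var}(X)=o(\E(X)^2)$ so that by Chebyshev $X>0$ a.a.s. The key computation is $\E(X(X-1))$, the expected number of ordered pairs of distinct isolated vertices, which equals $n(n-1)$ times the probability that two fixed distinct vertices $i,j$ are both isolated in $Y$. This two-vertex probability should be computed directly from Lemma \ref{cont} (or as a special case of Lemma \ref{lisolated} with $S$ the two-point complex $\{i,j\}$ having no edge): one has $f_0(S)=2$, $f_i(S)=0$ for $i\geq 1$, giving probability $\bigl[q_0+p_0 q_1^2\bigr]^{n-2}\cdot p_0^2$. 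I would then compare $\E(X(X-1))$ with $\E(X)^2$ and verify that the ratio tends to $1$, using $q_1 = 1-p_1$ with $p_1\to 0$ so that $q_1^2 = 1-2p_1+p_1^2$ and the correction terms are negligible in the relevant scaling.

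The main obstacle will be the careful asymptotic bookkeeping in the second moment estimate: I must confirm that the near-independence of the events ``$i$ isolated'' and ``$j$ isolated'' holds to the precision needed, i.e. that $\E(X(X-1))/\E(X)^2 = 1+o(1)$, rather than merely bounded. Concretely this reduces to showing that
\begin{eqnarray*}
\frac{\bigl[q_0+p_0q_1^2\bigr]^{n-2}}{(1-p_0p_1)^{2(n-1)}} = 1+o(1),
\end{eqnarray*}
which in turn hinges on controlling the exponent difference $\log(q_0+p_0q_1^2) - 2\log(1-p_0p_1)$ after multiplying by $n$. Expanding each logarithm to first order in the small quantities $p_0, p_0p_1$ and tracking that the leading terms cancel (both contributing $-\omega p_1$ worth of decay) while the residual is $o(1)$ is the delicate part; here I would need to use both $p_0=\omega/n\to 0$ and $p_1\sim (\log\omega)/\omega\to 0$. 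Once $\mathrm{Var}(X)=o(\E(X)^2)$ is established, Chebyshev's inequality yields $X\geq 1$ a.a.s., so $Y$ contains an isolated vertex and is therefore disconnected, a.a.s., completing part (A).
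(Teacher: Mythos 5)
Your proposal is correct and follows the paper's own proof essentially step for step: part (B) by the first moment method with $\E(X)=np_0(1-p_0p_1)^{n-1}\le \omega^{1/n}e^{-\frac{n-1}{n}\omega_1}\to 0$, and part (A) by Chebyshev's inequality, where the delicate ratio you identify, $\bigl[q_0+p_0q_1^2\bigr]^{n-2}/(1-p_0p_1)^{2(n-1)}\to 1$, is exactly what the paper verifies via the identity $q_0+p_0q_1^2=(1-p_0p_1)^2+p_0q_0p_1^2$ and the observation that $n\cdot p_0q_0p_1^2\to 0$ in the regime (\ref{minus}). One small inaccuracy: under the paper's definition, $S=\{i,j\}$ being an isolated subcomplex does \emph{not} forbid the edge $(ij)\subset Y$, so the probability that $i$ and $j$ are both isolated \emph{vertices} is $\bigl[q_0+p_0q_1^2\bigr]^{n-2}p_0^2\,q_1$ (the paper subtracts the isolated-edge event), not $\bigl[q_0+p_0q_1^2\bigr]^{n-2}p_0^2$ as you state --- but since $q_1\to 1$, and since your quantity is in any case an upper bound for $\E(X_iX_j)$, which is all the second moment method needs, this does not affect the argument.
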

We shall see below in \S \ref{seccon} that under condition (\ref{plus}) a random complex $Y$ is connected, a.a.s. 
\begin{proof}
For $i\in \{1, \dots, n\}$ let $X_i: \Omega_{n}^r \to \R$ denote the random variable $X_i(Y)=1$ if $Y$ contains $i$ as an isolated vertex, otherwise $X_i(Y)=0$. 
The sum $X=\sum_{i=1}^n X_i$ counts the number of isolated vertexes in random simplicial complexes. 
By Example \ref{vertex} we have 
$$\E(X)=np_0(1-p_0p_1)^{n-1}.$$

First, we shall assume (\ref{minus}). Then 
$$\E(X)= \omega\left(1- \frac{\log\omega -\omega_1}{n}\right)^{n-1}$$
and denoting $x= \frac{1}{n}(\log \omega -\omega_1)$ and using the power series expansion for $\log(1-x)$ we obtain
\begin{eqnarray*}
\log \E(X) &=& \log \omega -(n-1)\left[x +\frac{1}{2}x^2 +\frac{1}{3}x^3 +\dots\right]\\ \\
&=& \frac{1}{n}\log \omega +\frac{n-1}{n}\cdot  \omega_1 -(n-1)\cdot x^2\cdot [\frac{1}{2}+\frac{1}{3}x + \frac{1}{4} x^2 + \dots]\\ \\
&\ge &  \frac{n-1}{n}\cdot  \omega_1 \, -\, 1. 
\end{eqnarray*}
Here we used that $x=p_0p_1\to 0$ and
$$nx^2 \le n\cdot \frac{(\log \omega)^2}{n^2} = \frac{(\log \omega)^2}{n}\le \frac{(\log n)^2}{n} \to 0.$$
Therefore, the expectation $\E(X)$ tends to infinity. 

To show that $X>0$ under the assumption (\ref{minus}) we shall apply the Chebyshev inequality in the form 
\begin{eqnarray}\label{cheb1}
\PP_{r, \p}(X=0) \le \frac{\E(X^2)}{\E(X)^2} -1. 
\end{eqnarray}
Hence statement (A) of the lemma follows once we know that the ratio $\frac{\E(X^2)}{\E(X)^2}$ tends to $1$. 
Clearly $\E(X^2) = \sum_{i,j}\E(X_iX_j)$ and for $i\not=j$ the number $\E(X_iX_j)$ is the probability that 
 $i$ and $j$ are isolated vertices of $Y$. 
Obviously, this probability equals the difference $a-b$ where $a$ is the probability that $Y$ contains the complex $S=\{i,j\}$ as an isolated subcomplex and $b$ is the probability that 
$Y$ contains the edge $(ij)$ as an isolated subcomplex. 
Applying Lemma \ref{lisolated} one obtains that $a= \left[q_0+p_0q_1^2\right]^{n-2}p_0^2$ while $b=\left[q_0+p_0q_1^2\right]^{n-2}p_0^2p_1$ and hence 
 for $i\not=j$, 
$$\E(X_iX_j) = \left[q_0+p_0q_1^2\right]^{n-2}\cdot p_0^2\cdot q_1.$$ 
We obtain 
$$\mathbb{E}(X^2)=\mathbb{E}(X)+(n^2-n)\cdot (q_0+p_0q_1^2)^{n-2}\cdot p_0^2\cdot q_1. $$
Hence
\begin{eqnarray*}
\frac{\E(X^2)}{\E(X)^2}= \E(X)^{-1} + \left(1-\frac{1}{n}\right) \cdot \left[1+ \frac{p_0q_0p_1^2}{(1-p_0p_1)^2}\right]^{n-2}\cdot \frac{q_1}{(1-p_0p_1)^2}.
\end{eqnarray*}
The first summand $\E(X)^{-1}$ tends to $0$ (as shown above). Denoting $$y= \frac{p_0q_0p_1^2}{(1-p_0p_1)^2}$$ we observe that 
$$ny=\frac{(\log \omega - \omega_1)^2}{\omega}\cdot \frac{q_0}{(1-p_0p_1)^2} $$
tends to $0$ as $n\to \infty$. Hence 
\begin{eqnarray}\label{power}
1\le \left[1+\frac{p_0q_0p_1^2}{(1-p_0p_1)^2}\right]^{n-2} \le  \sum_{k=0}^{n-2} [(n-2)y]^k \le \frac{1}{1-(n-2)y}.
\end{eqnarray} 
and both sides of this inequality 
tend to $1$. Hence we conclude that the ratio 
$$\frac{\E(X^2)}{\E(X)^2}$$
tends to $1$ as $n\to \infty$ and (\ref{cheb1}) implies that a random complex $Y\in \Omega_n^r$ contains an isolated point with probability $\to 1$ as $n\to \infty$. 

Next we prove statement (B) under the assumption (\ref{plus}). 
We use the first moment method and show that the expectation $\E(X)$ tends to zero if (\ref{plus}) holds. 
As above, we have
\begin{eqnarray*}
\E(X) &=& np_0\left(1-p_0p_1\right)^{n-1} \\
&=& \omega \cdot \left(1-\frac{\log \omega + \omega_1}{n}\right)^{n-1} \\
&<&  \omega \cdot e^{-\frac{\log \omega +\omega_1}{n}\cdot (n-1)}\\
&=& \omega^{\frac{1}{n}}\cdot e^{-\frac{n-1}{n} \cdot \omega_1}. 
\end{eqnarray*}
The logarithm of the first factor $\frac{1}{n}\log \omega \le \frac{1}{n}\log n$ tends to zero and hence the first factor $\omega^{\frac{1}{n}}$ is bounded. 
Clearly,
the second factor tends to $0$ as $n\to \infty$. 
Thus, by the first moment method, a random complex $Y\in \Omega_n^r$ has an isolated vertex with probability tending to $0$ with $n$. 
\end{proof}

\section{Connectivity of random complexes}\label{seccon}

In this section we find the range (threshold) of connectivity of a multi-parameter random simplicial complex $Y\in \Omega_n^r$ 
with respect to the probability measure $\PP_{r, \p}$ where 
$$\p=(p_0, p_1, \dots, p_r)$$ is the multi-parameter. 
Everywhere in this section we shall assume 
that 
\begin{eqnarray}\label{everywhere}
p_0=\frac{\omega}{n}, \quad \mbox{where}\quad \omega \to \infty.
\end{eqnarray}
This is to ensure that the number of vertices of $Y$ tends to $\infty$. The connectivity depends only on the 1-skeleton and hence only the parameters $p_0$ and $p_1$ are relevant. 
Our treatment in this section is similar to the classical analysis of the connectivity of random graphs in the Erd\H{o}s--R\'{e}nyi model with an extra difficulty which arises due to the number of vertices being random. In the following section we apply Theorem \ref{propcon} to establish the region of simple connectivity of multi-parameter random simplicial complexes; this region depends on combination of the parameters $p_0, p_1, p_2$. 

The following is the main result of this section. 

\begin{theorem}\label{propcon} Consider a random simplicial complex $Y\in\Omega_{n}^{r}$ (where $r\ge 1$) with respect to a multi-parameter probability measure $\PP_{r, \p}$ satisfying (\ref{everywhere}). 
Assume that
\begin{eqnarray}\label{cc} p_1\, \ge\,  \frac{k\log \omega+c}{\omega}\end{eqnarray} 
for 
an integer $k\ge 1$ and a constant $c>0$.  
Then there exists a constant $N>0$ (depending on the sequence $\omega$ and on $c$) such that for all $n>N$ the complex $Y$ is connected with probability greater than 
\begin{eqnarray}\label{probab}
1-Ce^{-c}\omega^{1-k},
\end{eqnarray}
where $C$ is a universal constant. 
\end{theorem}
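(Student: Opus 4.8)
The plan is to reduce the connectivity of $Y$ to that of a classical Erd\H os--R\'enyi graph by conditioning on the vertex set, and then to run a first--moment argument on the small connected components. Connectivity depends only on the $1$-skeleton, and by formula (\ref{def1}) the conditional law of $Y$, given that its vertex set is a prescribed $W$ with $|W|=m$, is $\PP_{r,(1,p_1,\dots,p_r)}$ on $\Delta_W^{(r)}$ (the factors $p_0^{m}q_0^{n-m}$ are constant on this event and cancel after normalisation). Hence, conditioned on $f_0(Y)=m$, the $1$-skeleton of $Y$ is distributed exactly as the Erd\H os--R\'enyi graph $G(m,p_1)$, and
$$\PP_{r,\p}(Y \text{ disconnected}) \;=\; \sum_{m=0}^n \PP_{r,\p}(f_0(Y)=m)\cdot \PP\bigl(G(m,p_1)\text{ disconnected}\bigr).$$
First I would discard atypical $m$: by Lemma \ref{vertices}, $\PP_{r,\p}(|f_0(Y)-\omega|\ge\delta\omega)\le 2e^{-\omega^{2\epsilon}/3}$ with $\delta=\omega^{-1/2+\epsilon}\to 0$. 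This bound is super-polynomially small, hence $o(\omega^{1-k}e^{-c})$ and absorbed into the final constant, so it suffices to bound $\PP(G(m,p_1)\text{ disconnected})$ uniformly for $m$ in the window $[(1-\delta)\omega,(1+\delta)\omega]$, where in particular $\omega/2\le m\le 2\omega$ for large $n$.

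For such $m$ I would use the first--moment bound over spanning trees: a disconnected $G(m,p_1)$ has a connected component on $s$ vertices with $1\le s\le \lfloor m/2\rfloor$, so by Cayley's count $s^{s-2}$ of labelled trees,
$$\PP(G(m,p_1)\text{ disconnected}) \;\le\; \sum_{s=1}^{\lfloor m/2\rfloor} \binom{m}{s}\, s^{\,s-2}\, p_1^{\,s-1}\,(1-p_1)^{s(m-s)}.$$
The crucial point, and the reason conditioning is needed, is that in $G(m,p_1)$ the isolating factor is $(1-p_1)^{s(m-s)}$, with $s(m-s)\ge sm/2$ in the exponent. An unconditioned application of Lemma \ref{lisolated} would instead produce $[q_0+p_0q_1^{s}]^{n-s}$, which for large $s$ is of order $e^{-\Theta(\omega)}$ only, because it is dominated by configurations in which the complement of the component is empty; such configurations do not witness a component of size $\le f_0(Y)/2$, so they must be excluded, and conditioning on $f_0(Y)=m$ is exactly what forces the complement to consist of $m-s\ge s$ genuinely present vertices.

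The term $s=1$ dominates: it equals $m(1-p_1)^{m-1}\le 2\omega\,e^{-p_1(m-1)}$, and since $p_1(m-1)\ge (k\log\omega+c)(1-\delta-1/\omega)$ with $\delta\log\omega\to 0$, this is at most $(2+o(1))\,\omega^{1-k}e^{-c}$, uniformly in the window (compare Example \ref{vertex}). For the tail $s\ge 2$ I would show it is $o(\omega^{1-k}e^{-c})$ using $\binom{m}{s}s^{s-2}p_1^{s-1}\le (em)^{s}p_1^{s-1}/(s^2\sqrt{2\pi s})$ together with $(1-p_1)^{s(m-s)}\le e^{-s p_1 m/2}$. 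For small $s$ the ratio of the $s$-th to the $(s-1)$-st contribution is governed by a factor of order $(k\log\omega)\,\omega^{-k}\to 0$, so the series is geometrically dominated by $s=1$; in the delicate middle range $s\asymp m$, where $\binom{m}{s}$ is largest, the isolating factor is of order $e^{-\Theta(\omega\log\omega)}$ and beats the $e^{O(\omega\log\log\omega)}$ combinatorial growth, so each of the at most $\omega$ middle terms is negligible. This is the classical connectivity estimate for $G(m,p_1)$ and is where the only real bookkeeping lies.

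Combining the three ranges gives $\PP(G(m,p_1)\text{ disconnected})\le C\omega^{1-k}e^{-c}$ uniformly over the window; summing against $\PP_{r,\p}(f_0(Y)=m)$ and adding the negligible atypical contribution yields $\PP_{r,\p}(Y\text{ disconnected})\le C\omega^{1-k}e^{-c}$, which is (\ref{probab}). I expect the main obstacle to be exactly the interplay flagged in the text: the unconditioned first moment via Lemma \ref{lisolated} fails for large components, so one must condition on $f_0(Y)$ to recover the strong isolating factor $(1-p_1)^{s(m-s)}$, and then control the resulting bound uniformly over the random window of vertex counts provided by Lemma \ref{vertices}.
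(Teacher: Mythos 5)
Your proposal is correct and follows essentially the same route as the paper: the paper likewise discards vertex counts outside the window $|t-\omega|\le\delta\omega$ via Lemma \ref{vertices}, decomposes disconnection over the size $v$ of the smallest component with a Cayley spanning-tree witness, and bounds exactly your inner sum $\sum_{v=1}^{t/2}\binom{t}{v}v^{v-2}p_1^{v-1}q_1^{v(t-v)}$ uniformly by $C\omega^{1-k}e^{-c}$ --- its application of Lemma \ref{cont} to the pairs $A_{T,K}\subset B_{T,K}$ yields precisely the factorization $\binom{n}{t}p_0^t q_0^{n-t}\cdot[\text{inner sum}]$ that you obtain by conditioning on the vertex set, so your reduction of the conditional $1$-skeleton to $G(m,p_1)$ (and your diagnosis of why the unconditioned Lemma \ref{lisolated} fails for large components) is a clean repackaging of the same computation. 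One point to make explicit when writing this up: since $p_1$ is only bounded below by $(k\log\omega+c)/\omega$, substituting this threshold value into $p_1^{s-1}(1-p_1)^{s(m-s)}$ requires either the monotonicity verification the paper carries out (checking $\frac{s-1}{s-1+s(m-s)}\le\frac{k\log\omega+c}{\omega}\le p_1$) or, in your formulation, the monotone coupling of $G(m,p_1)$ in $p_1$; with that inserted, your three-range analysis ($s=1$ dominant, small $s$ geometrically dominated, middle range crushed by the isolating factor once the $p_1^{s-1}$ factor is included in the $e^{O(\omega\log\log\omega)}$ count) matches the paper's estimates.
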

%Since connectivity is a monotone property of simplicial complexes with respect to the number of edges we immediately have the following corollary:

\begin{corollary}\label{corconn} If additionally to (\ref{everywhere}) one has
 $$p_1=\frac{\log\omega+\omega_1}{\omega},$$ for a sequence $\omega_1\to \infty$
then a random complex $Y\in \Omega_{n}^{r}$ with respect to $\PP_{r, \p}$  is connected, a.a.s..
\end{corollary}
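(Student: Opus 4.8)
The plan is to derive Corollary~\ref{corconn} directly from Theorem~\ref{propcon} by choosing the integer parameter $k$ appropriately. The hypothesis is that $p_1 = (\log\omega + \omega_1)/\omega$ with $\omega_1\to\infty$, and I want to show that the failure probability bound $Ce^{-c}\omega^{1-k}$ from the theorem can be driven to $0$.

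First I would observe that the theorem lets us take any integer $k\ge 1$ together with any constant $c>0$, provided the inequality $p_1\ge (k\log\omega + c)/\omega$ holds. So the task reduces to verifying that the given $p_1$ satisfies this inequality for a suitable choice of $k$ and $c$. I would take $k=1$ and set $c=\omega_1$. Strictly speaking $c$ is required to be a constant while $\omega_1$ is a sequence tending to infinity, so the cleanest route is to fix any constant $c_0>0$ and observe that for all $n$ large enough (so that $\omega_1\ge c_0$, which holds eventually since $\omega_1\to\infty$) one has
$$p_1 = \frac{\log\omega + \omega_1}{\omega} \ge \frac{\log\omega + c_0}{\omega} = \frac{1\cdot \log\omega + c_0}{\omega},$$
so the hypothesis~(\ref{cc}) of Theorem~\ref{propcon} is met with $k=1$ and $c=c_0$. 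This already gives connectivity with probability at least $1 - C e^{-c_0}\omega^{0} = 1 - Ce^{-c_0}$, which is only a constant lower bound and not enough on its own.

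To upgrade this to an "a.a.s." statement, the key step is to exploit the freedom in the constant $c$ to absorb the growth of $\omega_1$. For each fixed $n$ (and hence fixed $\omega$ and $\omega_1$), I would instead apply the theorem with $k=1$ and $c=\omega_1$ treated as the constant for that value of $n$; then the failure probability is bounded by $Ce^{-\omega_1}\omega^{0} = Ce^{-\omega_1}$, which tends to $0$ as $n\to\infty$ since $\omega_1\to\infty$. The only subtlety is that Theorem~\ref{propcon} is stated for a genuinely constant $c$, so the threshold $N$ depends on $c$; I would address this by noting that the proof of the theorem gives a bound uniform enough that one may let $c$ grow slowly, or more simply by a direct limiting argument: given any target failure level $\eta>0$, choose a constant $c$ with $Ce^{-c}<\eta$, and then for all $n$ large enough $\omega_1\ge c$ so that $p_1\ge(\log\omega+c)/\omega$ and the theorem yields failure probability $< \eta$. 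Since $\eta$ is arbitrary, the failure probability tends to $0$.

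The main obstacle, and the only real subtlety, is the mismatch between the constant $c$ in the theorem and the sequence $\omega_1$; everything else is a routine substitution. I expect the cleanest writeup to use the last ``arbitrary $\eta$'' phrasing, since it sidesteps any question of whether $N$ can be taken uniform in $c$ and relies only on the theorem exactly as stated.
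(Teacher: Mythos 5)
Your proposal is correct and matches the paper's intent: the paper simply states that Corollary~\ref{corconn} ``follows from Theorem~\ref{propcon} in an obvious way,'' and your final ``arbitrary $\eta$'' argument (fix a constant $c$ with $Ce^{-c}<\eta$, note $\omega_1\ge c$ eventually so that hypothesis~(\ref{cc}) holds with $k=1$, then let $\eta\to 0$) is exactly the right way to make that deduction rigorous while respecting that $c$ must be constant. You correctly identified and resolved the only subtlety, so nothing further is needed.
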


Corollary \ref{corconn} complements the statement of part (A) of Lemma \ref{lm54} saying that a random complex $Y\in \Omega_n^r$ is disconnected if 
$p_1=\frac{\log\omega-\omega_1}{\omega}.$

Corollary \ref{corconn} follows from Theorem \ref{propcon} in an obvious way. 

\begin{example}\label{example63}{\rm 
Assume that $p_0=n^{-\alpha_0}$ and $p_1=n^{-\alpha_1}$
 where $\alpha_0, \alpha_1\ge 0$ are constants. In this special case Corollary 
\ref{corconn} implies that a random simplicial complex $Y\in \Omega_n^r$ is {\it connected} for 
\begin{eqnarray}\label{condomain}
\alpha_0+\alpha_1<1. 
\end{eqnarray}
Note that part (A) of Lemma \ref{lm54} implies that $Y$ is {\it disconnected} if 
\begin{eqnarray}\label{discondomain}
\alpha_0+\alpha_1>1. 
\end{eqnarray}

}
\end{example}

\begin{proof}[Proof of Theorem \ref{propcon}.]
%The connectivity of a simplicial complex is equivalent to the connectivity of the respective 1-skeleton. Besides, the 1-skeleton of a multi-parameter random complex with parameters $\mathfrak{p}=(p_0, p_1, \ldots , p_r)$ is itself a multi-parameter random complex with  parameters $\mathfrak{p}=(p_0, p_1)$. Hence we will prove the result for random graphs $Y\in\Omega_{n,1}$ with multi-parameter vector $\mathfrak{p}=(p_0, p_1)$. Furthermore the probability
%parameters satisfy $$p_0=\omega/n$$ and $$p_1=(k\log(\omega)+c)/\omega,$$ where $\omega(n)\to\infty$ as $n\to\infty$, $\omega(n)\leq n$.

For $v\ge 1$ let $E_v\subset \Omega_n^r$ denote the set of disconnected simplicial complexes $Y\subset \Delta_n$ such that 
$$v=\min_{j\in J} f_0(Y_j),$$
where $$Y=\sqcup_{j\in J} Y_j$$ is the decomposition of $Y$ into the connected components. 
In other words, $v$ is the smallest number of vertices contained in a single connected component of $Y\in E_v$. For $t=0, 1, \dots, n$ we denote by $E_{v,t}$ the intersection 
$$E_{v,t} = E_v \cap \Omega_{n,t}^r$$
where $\Omega_{n,t}^r$ is the set of all complexes $Y\in \Omega_{n,r}$ with $f_0(Y)=t$. 
Clearly, a complex $Y\in \Omega_{n,t}^r$ is disconnected if and only if $Y\in E_{v,t}$ for some $1\le v\le t/2$. 
By Lemma \ref{vertices}, for any fixed $\epsilon \in (0, 1/2)$,
\begin{eqnarray}\label{25}
%\large
\sum_{|t-\omega|> \delta\omega} \PP_{r, \p}(\Omega_{n,t}^r) \le 2\exp(-\frac{1}{3}\omega^{2\epsilon}). 
\end{eqnarray}
where 
\begin{eqnarray}\label{delta}\delta= \omega^{-\frac{1}{2}+\epsilon}.\end{eqnarray} 
(One may assume everywhere below that $\epsilon = 1/4$). 
Our goal is to estimate above the sum
\begin{eqnarray}\label{sum11}
\sum_{|t-\omega|\le\delta \omega}\,   \sum_{v\ge 1}^{t/2} \PP_{r, \p}(E_{v,t})
\end{eqnarray}
%where $\delta= \omega^{-2\epsilon}$. 
%By Lemma \ref{vertices} (where we set $\epsilon =1/4$) the number of vertices of $Y$ 
%satisfies $(1-\delta)\omega <f_0(Y)<(1+\delta)\omega$ with probability $\ge 1-2e^{-\frac{\omega^{1/2}}{3}}$. 
%
since (using (\ref{25})),
\begin{eqnarray}\label{sum2}
\PP_{r,\p}(Y; b_0(Y)>1) \le  \sum_{|t-\omega|<\delta \omega}\,   \sum_{v\ge 1}^{t/2} \PP_{r, \p}(E_{v,t})  +2\exp(-\frac{1}{3} \omega^{2\epsilon}).
\end{eqnarray}
The left hand side of (\ref{sum2}) is the probability that $Y$ is disconnected (i.e. its zero-dimensional Betti number $b_0(Y)$ is greater than $1$.) 
Hence an upper bound for the sum (\ref{sum11}) 
will give an upper bound on the probability that $Y$ is disconnected.

For a tree $T\subset \Delta_n$ on $v$ vertices and for a subset $K\subset \{1, \dots, n\} -F_0(T)$ of cardinality $t-v$,
denote
$$A_{T,K} = T\cup K, \quad B_{T,K}= \Delta_S \cup \Delta_K,$$
where $S=V(T)$ is the set of vertices of $T$ and $\Delta_S$ and $\Delta_T$ denote the simplexes spanned by $S$ and $T$ correspondingly. 
The pair of subcomplexes $A_{T,K}\subset B_{T,K}$ satisfies the condition of Lemma \ref{cont}. 
Let $P_{T,K}$ denote the probability
$$P_{T,K} \, = \, \PP_r(A_{T,K}\subset Y\subset B_{T,K})= p_0^{t} p_1^{v-1}q_0^{n-t}q_1^{v(t-v)},$$
where we have used Lemma \ref{cont}. 
Any complex $Y\in E_{v,t}$ satisfies $A_{T,K}\subset Y\subset B_{T, K}$ for a tree $T$ on $1\le v\le t/2$ vertices and for a unique subset $K$ of cardinality $t-v$. Hence, taking into account the Cayley formula for the number of trees on $v$ vertices we obtain
\begin{eqnarray*}
\PP_{r, \p}(E_{v,t}) &\le& \binom n v \cdot \binom {n-v} {t-v} \cdot v^{v-2}  \cdot P_{T,K} \\
&=&   \binom n v \cdot \binom {n-v} {t-v} \cdot v^{v-2}  \cdot   p_0^{t}\cdot p_1^{v-1}\cdot q_0^{n-t}\cdot q_1^{v(t-v)}\\
&=& \binom n t \cdot \binom t v \cdot v^{v-2}  \cdot   p_0^{t}\cdot p_1^{v-1}\cdot q_0^{n-t}\cdot q_1^{v(t-v)}.
\end{eqnarray*}
Therefore we have
\begin{eqnarray*}
 \sum_{t=(1-\delta)\omega}^{(1+\delta)\omega} \sum_{v=1}^{t/2} \PP_{r, \p}(E_{v,t}) &\le&  \sum_{t=(1-\delta)\omega}^{(1+\delta)\omega} \sum_{v=1}^{t/2}
\binom n t \cdot \binom t v \cdot v^{v-2}  \cdot   p_0^{t}\cdot p_1^{v-1}\cdot q_0^{n-t}\cdot q_1^{v(t-v)}\\
&=&  \sum_{t=(1-\delta)\omega}^{(1+\delta)\omega} \binom n t \cdot p_0^t\cdot q_0^{n-t} \cdot \sum_{v=1}^{t/2} \binom t v \cdot v^{v-2}\cdot p_1^{v-1}\cdot q_1^{v(t-v)}.
\end{eqnarray*}

Our plan is to show that there exists $N>0$ such that 
for the values of $t$ lying in the interval $[(1-\delta)\omega, (1+\delta)\omega]$ and for all $n>N$
the internal sum 
\begin{eqnarray}\label{internal}\sum_{v=1}^{t/2} \binom t v \cdot v^{v-2}\cdot p_1^{v-1}\cdot q_1^{v(t-v)}\end{eqnarray}
can be estimated above by $C\omega^{1-k}e^{-c}$ where $C$ is a universal constant. 
Then we will have 
\begin{eqnarray*} \sum_{t=(1-\delta)\omega}^{(1+\delta)\omega} \sum_{v=1}^{t/2} \PP_{r, \p}(E_{v,t})&\le& C\omega^{1-k}e^{-c}
 \sum_{t=(1-\delta)\omega}^{(1+\delta)\omega} \binom n t \cdot p_0^t\cdot q_0^{n-t}\\ &\le& C\omega^{1-k}e^{-c}
\end{eqnarray*}
which together with (\ref{sum2}) will complete the proof of Theorem \ref{propcon}. Note that the summand $2\exp(-\frac{1}{3} \omega^{2\epsilon})$ which appears in (\ref{internal}) is less than $\omega^{1-k}e^{-c}$ for $n$ large enough. 

For the term with $v=1$ we have
\begin{eqnarray*}
tq_1^{t-1} &=& t(1-p_1)^{t-1} \\ &\le& (1+\delta)\omega \cdot \exp(-p_1(t-1))\\
&=&  (1+\delta)\omega \cdot \exp(-p_1t)\cdot \exp(p_1)\\
&\le & (1+\delta)e\cdot \omega \cdot \exp(-\frac{k\log \omega +c}{\omega}\cdot (1-\delta)\omega)\\
&=& (1+\delta) e\cdot \omega^{1-k +k\delta}\cdot e^{-c(1-\delta)}\\
&=& \left\{(1+\delta)e\omega^{k\delta}e^{c\delta}\right\}\cdot \omega^{1-k}e^{-c}\\
&\le& 2e \cdot \omega^{1-k}e^{-c}
\end{eqnarray*}
for $n$ large enough. Here we used the fact that the expression in the figure brackets tends to $e$ for $n\to \infty$. Note that the factor $\omega^{k\delta}$ tends to $1$ as follows from the definition of $\delta$, see (\ref{delta}). 

Next consider the term with $v=2$: 
\begin{eqnarray*}
\binom t 2 \cdot p_1\cdot (1-p_1)^{2(t-2)} &\le& t^2\exp(-p_1(t-2)\cdot 2) \\ &=& t^2\cdot \exp(-2p_1t)\cdot \exp{4p_1}\\
&\le& e^4t^2\exp\left(-\frac{k\log\omega+c}{\omega}\cdot 2(1-\delta)\omega\right)\\
&\le& \left\{e^4(1+\delta)^2\omega^{2k\delta}e^{2\delta c} \right\}\cdot\omega^{2-2k} \cdot e^{-2c}\\
&\le& 2e^4 \omega^{1-k}e^{-c}
\end{eqnarray*}
for $n$ large enough. We used the fact that the expression in the figure brackets tends to $e^4$ for $n\to \infty$. 

Consider now a term with $v\ge 3$. Using the Stirling's formula we have 
\begin{eqnarray}\label{three}\binom t v v^{v-2} \le \frac{t^vv^{v-2}}{v!}  \le \frac{(et)^v}{\sqrt{2\pi}v^{5/2}} \le (3t)^v \le (3(1+\delta)\omega)^v \le (6\omega)^v.
\end{eqnarray}
The function $x\mapsto x^{v-1}(1-x)^{v(t-v)}$ is decreasing for 
$\frac{v-1}{v-1+v(t-v)} <x<1$. Hence, observing that for $n$ large enough 
$$\frac{v-1}{v-1+v(t-v)}\le \frac{1}{t-v+1} \le 2t^{-1} 
\le \frac{2}{(1-\delta)\omega}\le \frac{k\log \omega+ c}{\omega} \le p_1 \le 1$$
%$$p_1\ge \frac{k\log \omega +c}{\omega} >\frac{2}{1-\delta}\cdot \omega^{-1}\ge (t-v)^{-1}$$ 
we obtain 
\begin{eqnarray*}
p_1^{v-1}q_1^{v(t-v)} &\le & \left[\frac{k\log \omega +c}{\omega}\right]^{v-1}\cdot \left(1-\frac{k\log\omega+c}{\omega}\right)^{v(t-v)} \\
&\le&  \left[\frac{k\log \omega +c}{\omega}\right]^{v-1}\cdot \exp\left(-\frac{k\log \omega +c}{\omega}\cdot (t-v)\right)^v\\
&\le & \left[\frac{k\log \omega +c}{\omega}\right]^{v-1}\cdot \exp\left(-\frac{k\log \omega +c}{\omega}\cdot t/2\right)^v\\
&\le&\left[\frac{k\log \omega +c}{\omega}\right]^{v-1}\cdot \exp\left(-\frac{k\log \omega +c}{\omega}\cdot (1-\delta)\omega /2\right)^v\\
&=& \left[\frac{k\log \omega +c}{\omega}\right]^{v-1}\cdot \left[\omega^{-k\frac{1-\delta}{2}}\cdot e^{-c\frac{1-\delta}{2}}\right]^v.%\\
%&\le & \left[\frac{k\log \omega +c}{\omega}\right]^{v-1}\cdot \left[\omega^{-\frac{3}{2} k (1-\delta)}\cdot e^{-\frac{3}{2}c(1-\delta)}\right].
\end{eqnarray*}
Combining with (\ref{three}) we get 
\begin{eqnarray*}
\binom t v v^{v-2} p_1^{v-1}q_1^{v(t-v)} &\le& (6\omega)^v \cdot  \left[\frac{k\log \omega +c}{\omega}\right]^{v-1}\cdot \omega^{-k\frac{v(1-\delta)}{2}} \cdot e^{-c\frac{v(1-\delta)}{2}}\\
&=& 6\cdot \left[6(k\log\omega +c)\right]^{v-1} \cdot \omega^{1-k\frac{v(1-\delta)}{2}}\cdot e^{-c\frac{v(1-\delta)}{2}}\\
&\le & \left\{  6\cdot \left[6(k\log\omega +c)\right]^{v-1} \omega^{-k\left[ \frac{v(1-\delta)}{2}-1   \right]}  \right\} \omega^{1-k} \cdot e^{-c}\\
&\le & \left\{  6\cdot \left[6(k\log\omega +c)\right]^{v-1} \omega^{-k\left[ \frac{v-1}{7}  \right]}  \right\} \cdot \omega^{1-k} \cdot e^{-c}\\
&=& 6\cdot \left\{\left(6(k\log\omega +c)\right)\cdot \omega^{-\frac{k}{7}}  \right\}^{v-1}\cdot \omega^{1-k} \cdot e^{-c}.
%&\le& \omega^{1-k} \cdot e^{-c}.
\end{eqnarray*}
On one of the steps we used the inequality $\frac{v(1-\delta)}{2} -1\ge \frac{v-1}{7}$.
Observe that the expression 
$$q= \left(6(k\log\omega +c)\right)\cdot \omega^{-\frac{k}{7}}$$ tends to $0$ as $n\to \infty$ and hence there exists $N$ such that for all $n>N$ one has
\begin{eqnarray*}
\sum_{v=3}^{t/2} \binom t v v^{v-2} p_1^{v-1}q_1^{v(t-v)} &\le& 6 \omega^{1-k}e^{-c}\left\{\sum_{v=3}^{t/2} q^{v-1}\right\} \\
&\le&
12 q^2\cdot \omega^{1-k} e^{-c}\\
&\le & \omega^{1-k}e^{-c}.
\end{eqnarray*}
Combining this inequality with the estimates for $v=1$ and $v=2$ completes the proof of Theorem \ref{propcon}, as explained above. \end{proof}

\section{When is a random simplicial complex simply connected?}

In this section we give establish a region of simple connectivity of the random complex $Y\in \Omega_n^r$ with respect to the probability measure 
$\PP_{r, \p}$ where $$\p=(p_0, p_1, \dots, p_r).$$ Recall that a simplicial complex $Y$ is said to be simply connected if it is connected and its fundamental group 
$\pi_1(Y, y_0)$ is trivial. The last condition is equivalent to the requirement that any continuous map of circle $S^1\to Y$ can be extended to a continuous map 
of the 2-disc $D^2 \to Y$. 

As in the previous section we shall assume that 
\begin{eqnarray}\label{everywhere1}
p_0=\frac{\omega}{n}, \quad \mbox{where}\quad \omega \to \infty.
\end{eqnarray}

\begin{theorem}\label{simplec}
Let $Y\in \Omega_n^r$ be a random complex with respect to the measure $\PP_{r, \p}$ where $\p=(p_0, \dots, p_r)$. 
Additionally to (\ref{everywhere1}) we shall assume that there exist sequences $\omega_1, \omega_2, \omega_3\to \infty$ one has
\begin{eqnarray}
\omega p_1^3 &=& 3 \log \omega +\omega_1, \label{three1}\\
\omega p_1^2p_2 &=& 2\log \omega +\omega_2, \label{three2}\\
\omega p_1^3 p_2^2 &= &3\log \omega +6\log p_1 + \omega_3.\label{three3}
\end{eqnarray}
Then $Y$ is simply connected a.a.s.
\end{theorem}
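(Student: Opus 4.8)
The plan is to split the statement into connectivity, which is easy, and the vanishing of $\pi_1$, which is the substance; and to read the three hypotheses (\ref{three1})--(\ref{three3}) as the thresholds for three ``coverage'' events, each obtained by applying the connectivity results of \S\ref{seccon} to the link and link-intersection complexes produced in \S3.

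For connectivity, condition (\ref{three1}) forces $p_1=\bigl(\tfrac{3\log\omega+\omega_1}{\omega}\bigr)^{1/3}$, whence $\omega p_1=\omega^{2/3}(3\log\omega+\omega_1)^{1/3}$ dominates $\log\omega$; thus $p_1\ge \frac{\log\omega+\omega'}{\omega}$ with $\omega'\to\infty$ and Corollary \ref{corconn} gives that $Y$ is connected a.a.s. Since $\pi_1$ is carried by the $2$-skeleton I then work entirely with edge-loops and with the two elementary homotopies supplied by a filled triangle: deleting the middle vertex of $a,b,c$ when $\{a,b,c\}$ is a $2$-simplex, and the derived \emph{flip} replacing $a,b,c$ by $a,x,c$ whenever $\{a,b,x\}$ and $\{b,c,x\}$ are both $2$-simplices (compose the two triangle-homotopies and cancel the backtrack at $x$).

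Next I would establish, each a.a.s., the three structural facts that the hypotheses are tuned to give. (i) Every edge lies in a $2$-simplex: this is the absence of degree-zero edges and follows from Corollary \ref{degreezero}, whose hypothesis is exactly $\omega p_1^2p_2\ge 2\log\omega+c$, matched by (\ref{three2}) with $c=\omega_2\to\infty$; equivalently, by Lemma \ref{link} the link of a vertex is the random complex with multi-parameter $(p_0p_1,p_1p_2,\dots)$ and (\ref{three2}) is its connectivity threshold via Corollary \ref{corconn}. (ii) Every triple of vertices of $Y$ has a common neighbour joined to all three by edges: the expected number of offending triples is $\approx\omega^3(1-p_0p_1^3)^{n}\approx\omega^3 e^{-\omega p_1^3}$, which tends to $0$ precisely when $\omega p_1^3-3\log\omega\to\infty$, i.e. under (\ref{three1}). (iii) For any two vertices $a,b$ the intersection $L_a\cap L_b$ of their links is connected: by Lemma \ref{linksintersection} (with $k=2$) this intersection is a random complex with multi-parameter $(p_0p_1^2,\,p_1p_2^2,\dots)$, so its own vertex-scale is $\omega_M\approx\omega p_1^2$ and $\log\omega_M=\log\omega+2\log p_1$; applying Theorem \ref{propcon} to it with the integer $k=3$ turns (\ref{three3}) into the statement $\omega_M\cdot(p_1p_2^2)=3\log\omega_M+\omega_3$, which is exactly the hypothesis needed to make $L_a\cap L_b$ connected with the fast tail $Ce^{-\omega_3}\omega_M^{-2}$.

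Finally I would assemble these into a nullhomotopy of an arbitrary edge-loop. The point of (iii) is that a flip $v_i\mapsto x$ is admissible exactly when $x$ is adjacent to $v_i$ \emph{inside} $L_{v_{i-1}}\cap L_{v_{i+1}}$; since that complex is connected (and, by (ii), nonempty), the middle vertex of any length-two subpath may be transported by a chain of flips to any prescribed common neighbour of its two endpoints. Using this freedom together with the triangle move and the common-neighbour property (ii), I would drive down a complexity measure of the loop (length, then repetition) until it collapses, the very short loops being contracted by coning over a common neighbour. The main obstacle I anticipate is twofold. First, \emph{termination}: one must choose the flips so that the loop genuinely simplifies rather than merely migrating, and guarantee that the apex vertices created along the way still enjoy (i)--(iii); this is the genuine topological content. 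Second, \emph{simultaneity}: facts (i)--(iii) are needed for all edges, triples and pairs at once, and whereas the first-moment bounds for (i) and (ii) are comfortable, the union bound for (iii) over the $\approx\omega^2$ pairs is delicate, and is exactly why Theorem \ref{propcon} must be invoked at the third level $k=3$ (giving the $\omega_M^{-2}$ tail) and why the seemingly odd correction $6\log p_1=3\cdot 2\log p_1$ appears in (\ref{three3}). Controlling that union bound, rather than the homotopy-theoretic bookkeeping, is where I expect the real work to lie.
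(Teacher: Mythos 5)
Your probabilistic groundwork coincides with the paper's proof almost event for event: (i) absence of degree-zero edges from Corollary \ref{degreezero} under (\ref{three2}); (ii) common neighbours for triples, where your first-moment estimate $\omega^3 e^{-\omega p_1^3}$ is exactly the $k=3$ case of Lemma \ref{neighbour} (whose careful version, via Lemma \ref{cont}, handles the randomness of the vertex set); (iii) connectivity of $\lk_Y(i)\cap\lk_Y(j)$ via Lemma \ref{linksintersection} with $k=2$ followed by Theorem \ref{propcon} with $k=3$, and you correctly read the term $6\log p_1$ in (\ref{three3}) as converting $3\log\omega$ into $3\log\omega'$ for the rescaled parameter $\omega'=\omega p_1^2$. (Incidentally, the union bound over pairs that you single out as ``where the real work lies'' is a one-line first-moment computation in the paper: the expected number of bad pairs is at most $\binom n 2 \cdot p_0^2\cdot Ce^{-\omega_3}\omega^{-2}\le Ce^{-\omega_3}\to 0$.)

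The genuine gap is in the deterministic half. Where you attempt a hands-on contraction of edge-loops by triangle moves and flips, the paper applies the Nerve Lemma (Lemma \ref{nerve}, with $k=1$) to the cover of $Y$ by the stars of its vertices: stars are contractible; the pairwise intersection is $\lk_Y(i)\cap\lk_Y(j)$ when $(ij)\not\subset Y$ and $\left(\lk_Y(i)\cap\lk_Y(j)\right)\cup\st_Y(ij)$ when $(ij)\subset Y$, which (i) and (iii) make connected, the degree condition supplying $\lk_Y(i)\cap\lk_Y(j)\cap\st_Y(ij)\not=\emptyset$ to glue the union; and (ii) says every triple of stars intersects, so the nerve has complete $2$-skeleton and is simply connected, whence $Y$ is simply connected (connectivity comes for free here, making your separate appeal to Corollary \ref{corconn} unnecessary). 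Your flip-and-reduce scheme is left unproved exactly at its crux --- you concede that termination of the reduction ``is the genuine topological content'' --- and at least one step fails as stated: contracting a short loop by ``coning over a common neighbour'' requires the triangles through that neighbour to be \emph{filled} $2$-simplices, whereas property (ii), i.e.\ Lemma \ref{neighbour}, only provides adjacency by edges (it is a statement about the $1$-skeleton; the nerve argument needs nothing more, but your coning does). So the missing idea is precisely the Nerve Lemma reduction, which converts your three a.a.s.\ facts into simple connectivity with no loop combinatorics at all; without it, your proposal is a plausible plan rather than a proof.
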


\begin{remark} {\rm In general, the conditions (\ref{everywhere1}), (\ref{three1}), (\ref{three2}), (\ref{three3}) are independent. For example, if 
$p_2=1$ then (\ref{three1}) implies (\ref{three2}) and (\ref{three3}), while if $p_1=1$ then (\ref{three1}) is satisfied automatically and (\ref{three3}) implies (\ref{three2}). 
However if we assume that $p_i=n^{-\alpha_i}$ where $\alpha_i\ge 0$ are constants then (\ref{three1}), (\ref{three2}), (\ref{three3}) become
\begin{eqnarray}
\alpha_0 +3\alpha_2 <1,\label{three4}\\
\alpha_0+ 2\alpha_1 + \alpha_2 <1,\label{three5} \\
\alpha_0+3\alpha_1+ 2\alpha_2<1. \label{three6}
\end{eqnarray}
and we see that the last inequality (\ref{three6}) implies the inequalities (\ref{three4}) and (\ref{three5}).
}
\end{remark} 

\begin{corollary} \label{cor73} Let $\p=(p_0, p_1, \dots, p_r)$ be a multi-parameter of the form $p_i=n^{-\alpha_i}$, where $\alpha_i$ are constants, 
$i=0, 1, \dots, r$. A random complex $Y\in \Omega_n^r$ is simply connected assuming that 
\begin{eqnarray}
\alpha_0+3\alpha_1+ 2\alpha_2<1. 
\end{eqnarray}
\end{corollary}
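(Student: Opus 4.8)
The plan is to derive Corollary \ref{cor73} directly from Theorem \ref{simplec} by verifying that the polynomial inequality $\alpha_0+3\alpha_1+2\alpha_2<1$ implies the three asymptotic conditions (\ref{three1}), (\ref{three2}), (\ref{three3}) when each probability parameter has the form $p_i=n^{-\alpha_i}$. Since Theorem \ref{simplec} already guarantees simple connectivity a.a.s. under those three conditions, the entire task reduces to translating the power-law ansatz into each of the three required limits and checking that the single inequality suffices.

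First I would substitute $p_0=n^{-\alpha_0}$, $p_1=n^{-\alpha_1}$, $p_2=n^{-\alpha_2}$ and set $\omega=np_0=n^{1-\alpha_0}$; note that the standing assumption (\ref{everywhere1}) requires $\omega\to\infty$, i.e. $\alpha_0<1$, which is implied by our hypothesis since all $\alpha_i\ge 0$. Then I would compute the three left-hand sides as powers of $n$:
\begin{eqnarray*}
\omega p_1^3 &=& n^{1-\alpha_0-3\alpha_1},\\
\omega p_1^2 p_2 &=& n^{1-\alpha_0-2\alpha_1-\alpha_2},\\
\omega p_1^3 p_2^2 &=& n^{1-\alpha_0-3\alpha_1-2\alpha_2}.
\end{eqnarray*}
The key observation is that each of the three right-hand sides in (\ref{three1})--(\ref{three3}) grows only logarithmically in $n$ (the terms $\log\omega$, $\log p_1$ are all $O(\log n)$), so each asymptotic equation holds with $\omega_j\to\infty$ precisely when the corresponding exponent of $n$ on the left is strictly positive. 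Thus (\ref{three1}) holds iff $\alpha_0+3\alpha_1<1$, (\ref{three2}) holds iff $\alpha_0+2\alpha_1+\alpha_2<1$, and (\ref{three3}) holds iff $\alpha_0+3\alpha_1+2\alpha_2<1$.

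Next I would show that the single hypothesis $\alpha_0+3\alpha_1+2\alpha_2<1$ implies all three exponent inequalities. Since $\alpha_i\ge 0$ we have the elementary chain $\alpha_0+3\alpha_1\le \alpha_0+3\alpha_1+2\alpha_2<1$ and $\alpha_0+2\alpha_1+\alpha_2\le\alpha_0+3\alpha_1+2\alpha_2<1$, so the conditions (\ref{three4}) and (\ref{three5}) follow from (\ref{three6}) exactly as indicated in the Remark preceding the corollary. One small point to make precise is that a strictly positive exponent $1-\alpha_0-3\alpha_1-2\alpha_2=\beta>0$ indeed forces the defining sequence $\omega_3=\omega p_1^3p_2^2-3\log\omega-6\log p_1$ to tend to infinity: the dominant term $n^{\beta}$ overwhelms the subtracted $O(\log n)$ terms, and similarly for $\omega_1,\omega_2$.

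I do not expect any serious obstacle here; this is a routine specialization argument. The only mildly delicate point is bookkeeping with the logarithmic correction term $6\log p_1=-6\alpha_1\log n$ in (\ref{three3}), which must be confirmed to be of lower order than the polynomial growth $n^\beta$ — but this is immediate since polynomial growth dominates any logarithm. Having established that (\ref{three1}), (\ref{three2}), (\ref{three3}) all hold under the stated hypothesis, I would invoke Theorem \ref{simplec} to conclude that $Y$ is simply connected a.a.s., completing the proof.
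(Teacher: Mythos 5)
Your proposal is correct and matches the paper's own (implicit) proof exactly: the Remark preceding the corollary performs the same translation of (\ref{three1})--(\ref{three3}) into exponent inequalities under $p_i=n^{-\alpha_i}$, and the corollary then follows from Theorem \ref{simplec} since $\alpha_0+3\alpha_1+2\alpha_2<1$ dominates the other two conditions when $\alpha_i\ge 0$. Your careful checks (that $\alpha_0<1$ gives $\omega\to\infty$, and that the term $6\log p_1=-6\alpha_1\log n$ is of lower order than $n^{\beta}$) are exactly the right bookkeeping; incidentally, your computation confirms that the paper's displayed inequality (\ref{three4}) should read $\alpha_0+3\alpha_1<1$ rather than $\alpha_0+3\alpha_2<1$, an evident typo that your version silently corrects.
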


We shall show in a forthcoming paper that a random complex is not simply connected if $\alpha_0+3\alpha_1+ 2\alpha_2>1.$

\begin{remark} {\rm In the special case $p_0=p_1=1$ (the Linial - Meshulam model) Theorem \ref{simplec} reduces to Theorem 1.4 from \cite{BHK}. In the special case $p_0=p_2=1$ (clique complexes of random graphs) the result of Theorem \ref{simplec} follows from Theorem 3.4 from \cite{Kahle1}. 

The general plan of the proof of Theorem \ref{simplec} repeats the strategy of \cite{Kahle1}, proof of Theorem 3.4; namely, we apply the Nerve Lemma to the cover by stars of vertices. 
}
\end{remark}
%This result contains two known special cases: (a) if $\alpha_0=\alpha_1=0$ (the Linial - Meshulam model) then $Y$ is simply connected for $\alpha_2<1/2$, and 
%(b) if $\alpha_0=\alpha_2=0$ (the clique complex of a random graph), $Y$ is simply connected if $\alpha_1<1/3$. 
%

First recall a version of the Nerve Lemma, see Lemma 1.2 in \cite{BLVZ}.

\begin{lemma}[The Nerve Lemma]\label{nerve}
 Let $X$ be a simplicial complex and let $\{S_i\}_{i\in I}$ be a family of subcomplexes covering $X$. 
 Suppose that for any $t\geq 1$ every non-empty intersection $$S_{i_1}\cap\ldots\cap S_{i_t}$$ is $(k-t+1)$-connected . 
 Then $X$ is $k$-connected if and only if the nerve complex
  $\mathcal{N}(\{S_i\}_{i\in I})$ is $k$-connected. 
\end{lemma}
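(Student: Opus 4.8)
The plan is to compare $X$ and the nerve $\mathcal{N}=\mathcal{N}(\{S_i\}_{i\in I})$ through an auxiliary space, the homotopy colimit (Mayer--Vietoris blowup) $B$ of the diagram of intersections indexed by the face poset of $\mathcal{N}$. For a simplex $\sigma=\{i_1,\dots,i_t\}$ of $\mathcal{N}$ write $S_\sigma=S_{i_1}\cap\dots\cap S_{i_t}$, which is nonempty by the definition of the nerve, and form $B$ by gluing the pieces $S_\sigma\times\Delta^\sigma$ along the face inclusions. There are two canonical maps, $\alpha\colon B\to X$ (collapse each simplex factor $\Delta^\sigma$ to a point and take the resulting union) and $\beta\colon B\to\mathcal{N}$ (collapse each factor $S_\sigma$ to a point). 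I would show that $\alpha$ is a homotopy equivalence and that $\beta$ is $(k+1)$-connected; since a $(k+1)$-connected map induces isomorphisms on $\pi_j$ for $j\le k$, the zig-zag $X\xleftarrow{\ \simeq\ }B\xrightarrow{\ \beta\ }\mathcal{N}$ then gives $\pi_j(X)=0$ for all $j\le k$ if and only if $\pi_j(\mathcal{N})=0$ for all $j\le k$, which is exactly the asserted equivalence of $k$-connectivity.

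The map $\alpha$ requires no connectivity hypothesis. Because the $S_i$ are simplicial subcomplexes of $X$, every inclusion $S_{\sigma'}\hookrightarrow S_\sigma$ (for $\sigma\subset\sigma'$) is a cofibration and the diagram $\sigma\mapsto S_\sigma$ is cofibrant over the face poset of $\mathcal{N}$. Hence the natural comparison map from the homotopy colimit $B$ to the ordinary colimit, which is precisely the union $\bigcup_i S_i=X$, is a homotopy equivalence. This is the standard projection lemma for covers of a CW complex by subcomplexes.

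For $\beta$ I would use the connectivity hypotheses together with a skeletal filtration of $\mathcal{N}$. Filter $B$ by the $\beta$-preimages of the skeleta $\mathcal{N}^{(s)}$. Passing from the $(s-1)$-skeleton to the $s$-skeleton attaches, for each $s$-simplex $\sigma$ (so $t=s+1$), a copy of $S_\sigma\times(\Delta^\sigma,\partial\Delta^\sigma)$. By hypothesis $S_\sigma$ is $(k-t+1)=(k-s)$-connected, so collapsing the $S_\sigma$ factor introduces relative cells only in dimensions $\ge (k-s)+s+1=k+1$; summing over all simplices, the homotopy fibre of $\beta$ is $k$-connected, i.e. $\beta$ is $(k+1)$-connected. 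The bookkeeping that the fibre connectivity $k-s$ and the cell dimension $s$ add to the constant $k$ is exactly why the hypotheses are phrased as ``$(k-t+1)$-connected''. This balancing is the heart of the argument and is most transparent through the homotopy-colimit spectral sequence $E^2_{s,q}=H_s(\mathcal{N};\mathcal{H}_q)$, where $\mathcal{H}_q$ is the coefficient system $\sigma\mapsto H_q(S_\sigma)$: it degenerates below total degree $k+1$ because $H_q(S_\sigma)=0$ for $0<q\le k-\dim\sigma$.

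The main obstacle is making this last step rigorous at the level of homotopy groups rather than homology, since $k$-connectivity involves $\pi_1$ and hence van Kampen subtleties. I would handle this by running an induction on $|I|$ in parallel with the filtration: the inductive step is a homotopy pushout gluing $X'=\bigcup_{i<m}S_i$ to $S_m$ along $X'\cap S_m=\bigcup_{i<m}(S_i\cap S_m)$, where the Blakers--Massey theorem converts the connectivity of the intersection pieces into connectivity of the comparison maps, and the low-dimensional cases $\pi_0,\pi_1$ are controlled directly by the classical van Kampen theorem. Once $\beta$ is shown to be $(k+1)$-connected and $\alpha$ is known to be a homotopy equivalence, the comparison along $X\xleftarrow{\ \simeq\ }B\xrightarrow{\ \beta\ }\mathcal{N}$ yields the stated equivalence of $k$-connectivity.
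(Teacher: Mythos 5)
The paper does not prove this lemma at all: it is imported verbatim from the literature (Lemma~1.2 of the cited paper of Bj\"orner, Lov\'asz, Vre\'cica and \v{Z}ivaljevi\'c \cite{BLVZ}), so there is no in-paper argument to match yours against. What you have written is, in outline, the standard modern proof of exactly this statement: compare $X$ and $\mathcal{N}$ through the homotopy colimit $B$ of the diagram $\sigma\mapsto S_\sigma$, use the projection lemma (cofibrancy of the diagram of subcomplexes) to see that $\alpha\colon B\to X$ is a homotopy equivalence with no connectivity hypotheses, and use the hypotheses to show $\beta\colon B\to \mathcal{N}$ is $(k+1)$-connected, the bookkeeping $(k-s)+s+1=k+1$ being precisely why the hypothesis is phrased with $k-t+1$. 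This is essentially Bj\"orner's argument (see his paper \emph{Nerves, fibers and homotopy groups}), and since a $(k+1)$-connected map induces isomorphisms on $\pi_j$ for $j\le k$, the zig-zag does deliver the stated equivalence, including in the non-simply-connected range.

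One point deserves a warning, because as phrased it is the only place your plan could silently go wrong. In the induction on $|I|$, the statement you carry along must be the connectivity of the \emph{comparison maps} (the restriction of $\beta$ over subcomplexes of the nerve), handled by the gluing lemma for connectivity of maps of homotopy pushout squares, i.e.\ the map-level consequence of Blakers--Massey. It must not be the connectivity of the \emph{spaces} $X'\cap S_m=\bigcup_{i<m}(S_i\cap S_m)$: to deduce that from the inductive hypothesis you would need the nerve of the restricted cover $\{S_i\cap S_m\}_{i<m}$ --- a full subcomplex of the link of $m$ in $\mathcal{N}$ --- to be $(k-1)$-connected, and this does not follow from $k$-connectivity of $\mathcal{N}$. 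Your wording (``Blakers--Massey converts the connectivity of the intersection pieces into connectivity of the comparison maps'') suggests you intend the correct, map-level version; similarly, the spectral-sequence remark gives only homological connectivity, so the $\pi_1$ control via van Kampen that you flag is genuinely needed and is where the cases $t=1,2,3$ of the hypothesis are consumed. With those caveats made explicit, the proposal is a correct route to the lemma, and a more self-contained one than the paper's bare citation.
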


Recall that the nerve $\mathcal{N}(\{S_i\}_{i\in I})$ is defined as the simplicial complex on the vertex set $I$ 
with a subset $\sigma\subset I$ forming a simplex if and only if the intersection
 $\cap_{i\in \sigma} S_i\neq\emptyset$ is not empty.

Given a random simplicial complex $Y\subset \Delta_n^{(r)}$, one may apply the Nerve Lemma \ref{nerve} to the cover 
$\{S_i\}_{i\in I}$, where $I=V(Y)$ (the set of vertices of $Y$) and $S_i$ is the star of the vertex $i$ in $Y$. Note that each star $S_i$ is contractible so that the condition of the Lemma \ref{nerve} is automatically satisfied for $t=1$. To establish the simple connectivity of $Y$ we need to show that (a) the intersection of any two stars $S_i\cap S_j$ is connected and (b) that the nerve complex $\mathcal{N}(\{S_i\}_{i\in I})$ is simply connected. 

Let us first tackle the task (b). The nerve $\mathcal{N}(\{S_i\}_{i\in I})$ is simply connected provided it has complete 2-dimensional skeleton, i.e. the intersection of any three stars $S_i\cap S_j\cap S_r\not=\emptyset$ is non-empty. This condition can be expressed by saying that any 3 vertices of $Y$ have a common neighbour, compare \cite{Kahle1}, \cite{Mesh}. 
The following Lemma describes the conditions under which any $k$ vertices of a random simplicial complex $Y\in \Omega_n^r$ have a common neighbour. 

\begin{lemma} \label{neighbour} Assume that a random simplicial complex $Y\in \Omega_{n}^r$ with respect to the measure $\PP_{r, \p}$ where $\p=(p_0, \dots, p_r)$, satisfies \begin{eqnarray}
\label{32}
p_0=\frac{\omega}{n}, \quad\quad  %\mbox{where}\quad  
\omega\to \infty\end{eqnarray} and 
\begin{eqnarray}\label{33}
p_1\, =\,  \left(\frac{k\log \omega +\omega_1}{\omega}\right)^{1/k}
\end{eqnarray}
where $k\ge 2$ is an integer and $\omega_1 \to \infty$. 
Then every $k$ vertices of $Y$ have a common neighbour, a.a.s.
%For a positive integer $k$ the 
%If $k$ is a positive integer and $p_0$ and $p_1$ are probability parameters satisfying:
%\begin{enumerate}
%\item $np_0\to \infty$;
%\item $np_0p_1^k=k\log(np_0)+\omega$, where $\omega=\omega(n)\to\infty$
%\end{enumerate}
%then for $\mathfrak{p}=(p_0,p_1)$ a multi-parameter random graph $G$ satisfies the following property a.a.s.: every $k$ vertices in $G$ have a common neighbor.
\end{lemma}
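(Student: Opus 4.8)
The plan is to use the second moment method applied to the random variable counting the number of ``bad'' $k$-tuples of vertices, i.e. $k$-element subsets of $V(Y)$ having no common neighbour. First I would set up the counting. For a fixed $k$-element subset $\sigma = \{v_1, \dots, v_k\}$ of $\{1, \dots, n\}$ and a vertex $w \notin \sigma$, the vertex $w$ is a common neighbour of $\sigma$ in $Y$ precisely when all $k$ edges $(v_i w)$ belong to $Y$ and the vertices $v_1, \dots, v_k, w$ are present; more precisely, $w$ is a common neighbour iff each $(v_i, w)$ is an edge of $Y$, which by Corollary~\ref{cont2c} happens with probability $p_0^{k} p_0 p_1^{k}$ for the event that $\sigma \cup \{w\}$ together with the $k$ spokes lies in $Y$. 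I would first condition on the event from Lemma~\ref{vertices} that $f_0(Y)$ is concentrated near $\omega$, which holds a.a.s.; on this event one has roughly $\omega$ vertices to play with. The key probability to isolate is $P = p_0 p_1^{k}$, the probability that a given candidate vertex $w$ is adjacent to all $k$ vertices of $\sigma$ (conditioned on $\sigma \subset Y$); note the exponent on $p_1$ is exactly $k$, which is why the hypothesis (\ref{33}) scales $p_1^k$ like $(k \log \omega + \omega_1)/\omega$.

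Next I would compute the expected number $\E(X)$ of bad $k$-tuples. For a fixed $\sigma$ with $\sigma \subset Y$, the probability that $\sigma$ has no common neighbour among the other roughly $n - k$ vertices is approximately $(1 - p_0 p_1^{k})^{n-k}$, since for each candidate $w$ the event of being a common neighbour is governed independently (the spokes to different $w$ involve disjoint edge sets). Multiplying by $p_0^{\binom{k}{2}+0}\cdots$, the probability that $\sigma$ itself sits in $Y$, and summing over the $\binom{n}{k}$ choices of $\sigma$, I expect $\E(X)$ to be comparable to
\begin{eqnarray*}
\binom{n}{k} \cdot (\text{prob. }\sigma \subset Y) \cdot \left(1 - p_0 p_1^{k}\right)^{n-k}.
\end{eqnarray*}
Using $n p_0 = \omega$ and $p_0 p_1^{k} = (k \log\omega + \omega_1)/(n)$, the factor $(1 - p_0 p_1^k)^{n-k} \approx \exp(-(k\log\omega + \omega_1)) = \omega^{-k} e^{-\omega_1}$, and the polynomial prefactor in $\omega$ coming from the $\binom{n}{k}$ term and the vertex-presence probabilities should be of order $\omega^{k}$ (one factor of $\omega$ per vertex of $\sigma$). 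Thus $\E(X)$ should behave like a bounded power of $\omega$ times $e^{-\omega_1}$, which tends to $0$ because $\omega_1 \to \infty$. If the bookkeeping confirms $\E(X) \to 0$, then by the first moment method (Markov's inequality) the probability that any bad $k$-tuple exists tends to $0$, and we are done without even needing the second moment.

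The main obstacle I anticipate is the bookkeeping of the vertex-presence and edge probabilities so that the prefactor is genuinely $\omega^{k}$ and not a larger power, together with making the ``no common neighbour'' estimate rigorous. The events ``$w$ is a common neighbour'' for distinct candidate vertices $w$ are independent given $\sigma \subset Y$, because they involve disjoint sets of potential edges, so the factorisation $(1 - p_0 p_1^k)^{n-k}$ should be exact rather than approximate once one applies Lemma~\ref{cont} or Corollary~\ref{cont2c} correctly; the delicate point is handling the conditioning on $\sigma \subset Y$ and the fact that the relevant spoke probabilities combine $p_0$ (for the candidate vertex) with $p_1^k$ (for the $k$ spokes). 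I would also need to absorb the negligible contribution of the event $|f_0(Y) - \omega| > \delta \omega$ using the bound $2\exp(-\tfrac{1}{3}\omega^{2\epsilon})$ from Lemma~\ref{vertices}, which is smaller than $\E(X)$ for large $n$. Once $\E(X) \to 0$ is established, the conclusion that every $k$ vertices have a common neighbour a.a.s.\ follows immediately.
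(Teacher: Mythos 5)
Your proposal is correct and takes essentially the same route as the paper: the paper also uses a first moment bound on bad $k$-sets, deriving the exact expression $\binom{n}{k}\,p_0^k\,(1-p_0p_1^k)^{n-k}$ (obtained there by a sandwich argument with Lemma \ref{cont} over missing-edge configurations, rather than by the conditional-independence claim you invoke, though the two are equivalent) and bounding it by $\omega^{k^2/n}\,e^{-\omega_1 (n-k)/n}\to 0$, so the second moment is indeed never needed. One small slip: your prefactor ``$p_0^{\binom{k}{2}+0}\cdots$'' for the probability that $\sigma$ sits in $Y$ should simply be $p_0^k$ (the $k$ vertices need only be present, not span edges), but your subsequent bookkeeping — one factor of $\omega$ per vertex, giving $\E(X)$ of order $\omega^k\cdot\omega^{-k}e^{-\omega_1}=e^{-\omega_1}$ — is exactly right.
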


\begin{proof} Given a subset $S\subset \{1, \dots,n\}$ with $|S|=k$ elements, we want to estimate the probability that a random complex 
$Y\in \Omega_n^r$ contains $S$ and the vertices of $S$ have no common neighbours in $Y$. 

Let $T\subset \{1, \dots, n\}$ be a set of $|T|=t$ vertices containing $S$ and 
let $$E_J=\{e_\alpha\}_{\alpha\in J}$$
be a set of edges of $\Delta_n$ such that each edge $e_\alpha$ connects a point $\alpha(0)\in S$ with a point $\alpha(1)\in T- S$ and 
for any $i\in T - S$ there exists $\alpha\in J$ such that $\alpha(1)=i$. Clearly, $t-k \le |J| \le k(t-k)$. 
Denote by $A_J\subset \Delta_n$ the graph obtained by adding to $T$ all edges connecting points of $S$ with points of $T-S$ which do not belong to $E_J$. 
Denote by $B_J$ the subcomplex of $\Delta_n$ obtained from the simplex $\Delta_T$ spanned by $T$ by removing the union of open stars of the edges 
$e_\alpha$, where $\alpha\in J$. In other words, to obtain $B_J$ we remove from $\Delta_T$ all simplexes which contain one of the edges $e_\alpha$ for $\alpha\in J$. 
The pair $A_J\subset B_J$ satisfies the condition of Lemma \ref{cont}; indeed, external faces of $B_J$ are vertices $\{1, \dots, n\}-T$ and the edges $e_\alpha$; all these are also external faces of $A_J$. Applying Lemma \ref{cont} we obtain
\begin{eqnarray}\label{ta}
\PP_{r, \p}(A_J\subset Y\subset B_J) = p_0^t\cdot p_1^{k(t-k)-|J|}\cdot q_0^{n-t}\cdot q_1^{|J|}.
\end{eqnarray}

Note that any complex $Y\in \Omega_n^r$ containing the set of vertices $S$ and such that there is no common neighbour for $S$ in $Y$ satisfies 
$$A_J\subset Y\subset B_J$$ for $T=V(Y)$ (the set of vertices of $Y$) and for a unique choice of the set of edges $E_J$ (it is the set of edges connecting points of 
$S$ with points of $T-S$ which do not belong to $Y$). 

For a set of edges $J$ as above and for a vertex $i\in T-S$ we denote by $\beta_i^J$ the number of edges $e_\alpha\in E_J$ such that $\alpha(1)=i$. 
Then 
$$1\le \beta^J_i\le k\quad\mbox{and}\quad  |J|=\sum_{i\in T_S} \beta_i^J.$$
There are 
$\binom n t \cdot \binom t k$ choices for the pair $S\subset T$ and there are 
$$\prod_{i=1}^{t-k}\binom k {\beta^J_i}$$ choices for the set $E_J$ with given vector 
$(\beta_1^J, \dots, \beta_{t-k}^J)$, and each $\beta^J_i$ can vary in the interval $\{1, \dots, k\}$. Hence we obtain that the probability that a random complex $Y\in \Omega_n^r$ 
has $k$ vertices without a common neighbour equals
\begin{eqnarray*}&{}& \sum_{t=k}^n \binom n t \cdot \binom t k \cdot \sum_{1\le \beta_i\le k} \prod_{i=1}^{t-k} \binom k {\beta_i}\cdot 
p_0^t p_1^{k({t-k})-\sum \beta_i} \cdot q_0^{n-t}\cdot q_1^{\sum \beta_i}\\ 
&=&  \binom n k \sum_{t=k}^n \binom {n-k}{t-k} \cdot p_0^t \cdot p_1^{k(t-k)} \cdot \left\{\left(1+\frac{q_1}{p_1}\right)^{k} -1\right\}^{t-k}\cdot q_0^{n-t}\\
&=&  \binom n k \sum_{t=k}^n \binom {n-k}{t-k} \cdot p_0^{t}\left(1-p_1^{k}\right)^{t-k}\cdot q_0^{n-t}
\\&=& p_0^k \binom n k \left(q_0+p_0(1- p_1^{k})\right)^{n-k}\\
&=&  p_0^k \binom n k \left(1-p_0p_1^k\right)^{n-k}. 
\end{eqnarray*}
Hence taking into account our assumptions (\ref{32}) and (\ref{33}) we obtain that the probability that a random complex $Y\in \Omega_n^r$ 
has $k$ vertices without a common neighbour is 
\begin{eqnarray*}
&{}&p_0^k \binom n k \left(1-p_0p_1^k\right)^{n-k}\\
&\le&  p_0^kn^k e^{-np_0p_1^k\frac{n-k}{n}}\\
&\le& \omega^k \cdot e^{(-k\log \omega - \omega_1)\frac{n-k}{n}}\\
&=& \omega^{\frac{k^2}{n}}\cdot e^{-\omega_1\frac{n-k}{n}}.
\end{eqnarray*}
The logarithm of the first factor $\omega^{\frac{k^2}{n}}$ tends to $0$ (as $\log \omega\le \log n$) and therefore the first factor tens to $1$, i.e. it is bounded, while the second factor $e^{-\omega_1\frac{n-k}{n}}$ clearly tends to zero. This complex the proof. 
\end{proof}

\begin{proof}[Proof of Theorem \ref{simplec}]

Let $A_n^r\subset \Omega_n^r$ denote the set of simplicial complexes $Y$ such that for any two vertices $i, j\in Y$ the intersection of their links 
$\lk_Y(i)\cap \lk_Y(j)$ is connected.

Let $B_n^r\subset \Omega_n^r$ denote the set of simplicial complexes $Y$ such that the degree of any edge $e\subset Y$ satisfies $\deg_Y e\ge 1$. 

Let $C_n^r\subset \Omega_n^r$ denote the set of simplicial complexes $Y$ such that any three vertices of $Y$ have a common neighbour. 

Let us show that $\PP_{r, \p}(A_n^r)\to 1$ as $n \to \infty$ under the assumption (\ref{three3}). Indeed, by Lemma \ref{linksintersection}, the intersection of two links is a multiparameter random simplicial complex with the multi-parameter $(p'_0, p'_1, \dots, p'_{r-1})$ where $$p'_i = p_ip_{i+1}^2.$$ Next we apply Theorem 
\ref{propcon} with $k=3$. Our assumption (\ref{three3}) is equivalent to 
$$p_1'= \frac{3\log \omega' + \omega_3}{\omega'} , \quad \mbox{where}\quad \omega'=np_0',$$
$p_0'=p_0p_1^2$, $p_1'=p_1p_2^2$ and $\omega= np_0$. 
By Theorem \ref{propcon}, the probability that the intersection of links of a given pair of vertices of $Y$ is disconnected is less or equal than 
$Ce^{-\omega_3}\omega^{-2}$, for a universal constant $C$. It follows that the expected number of pairs of vertices 
$i, j$ of 
$Y\in \Omega_n^r$ such that the intersection $\lk_Y(i)\cap \lk_Y(j)$ is disconnected is less or equal than 
$$\binom n 2 \cdot p_0^2 \cdot Ce^{-\omega_3} \omega^{-2}\le Ce^{-\omega_3},$$
which tends to $0$ with $n$. 

\begin{figure}[h]
\centering
\includegraphics[width=0.75\textwidth]{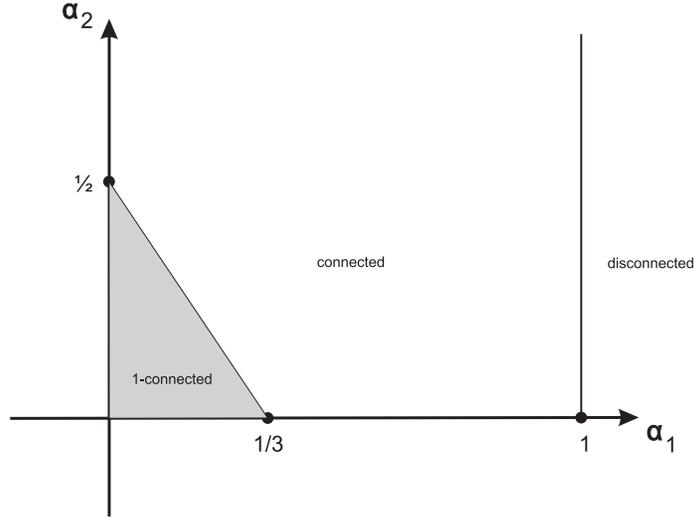}
\caption{Regions of connectivity and simple connectivity.} \label{simpleconnectivity}
\end{figure}
By Corollary \ref{degreezero}, the probability that $Y$ contains an edge of degree zero is less than $p_1e^{2-\omega_2}$ under the assumption (\ref{three2}). Hence we see that $\PP_{r, \p}(B_n^r)\to 1$ as $n \to \infty$, under the assumption (\ref{three2}). 

By Lemma \ref{neighbour}, due to the assumption (\ref{three1}), one has $\PP_{r, \p}(C_n^r)\to 1$ as $n \to \infty$. 

It follows that $$\PP_{r, \p}(A_n^r\cap B_n^r \cap C_n^r)\to 1$$ as $n \to \infty$. Let us show that every complex $Y\in A_n^r\cap B_n^r \cap C_n^r$ is simply connected. As explained in the paragraph preceding Lemma \ref{neighbour}, we may apply the Nerve Lemma \ref{nerve} to the cover by stars of vertices, and 
we only need to establish the task (a), i.e. to show that in a random complex $Y\in \Omega_n^r$ (under the assumptions of Theorem \ref{simplec}) the intersection of the stars of any two vertices is connected, a.a.s. Note that the task (b) is automatically satisfied because $Y\in C_n^r$. 

Let $i, j$ be two distinct vertices of $Y$. If the edge $(ij)$ is not contained in $Y$ then 
\begin{eqnarray}\st_Y(i)\cap \st_Y(j) = \lk_Y(i)\cap \lk_Y(j);\end{eqnarray}
This intersection is connected since $Y\in A_n^r$. 
However if $(ij)\subset Y$ then we have 
\begin{eqnarray}\label{inters}
\st_Y(i)\cap \st_Y(j) =\left(\lk_Y(i)\cap \lk_Y(j)\right)\cup \st_Y(ij).
\end{eqnarray}
The intersection $\lk_Y(i)\cap \lk_Y(j)$ is connected (since $Y\in A_n^r$) and $\lk_Y(i)\cap \lk_Y(j)$ is non-empty (since $Y\in B_n^r$). 
Since the star $\st_Y(ij)$ is non-empty and contractible, the union (\ref{inters}) is connected since 
 $$\lk_Y(i)\cap \lk_Y(j)\cap \st_Y(ij)\not=\emptyset$$
 (since $Y\in B_n^r$). 
 As explained above, the Nerve Lemma \ref{nerve} is applicable and implies that any $Y\in A_n^r\cap B_n^r \cap C_n^r$ is simply connected.
\end{proof}
\begin{figure}[h]
\centering
\includegraphics[width=0.6\textwidth]{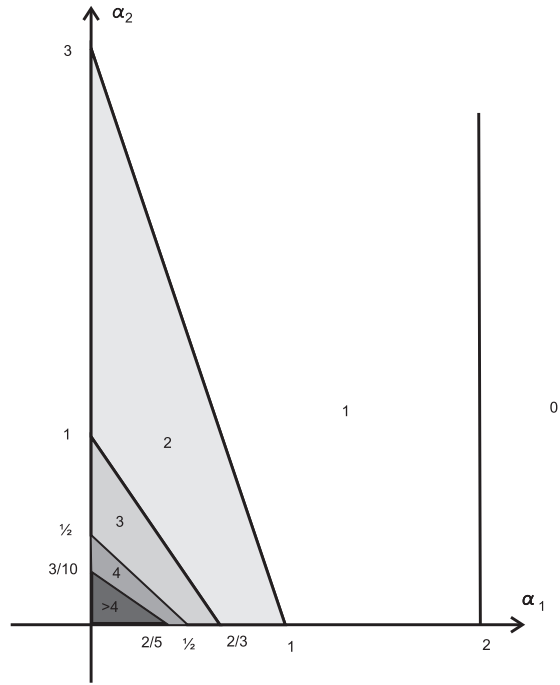}
\caption{Dimension of the random simplicial complex for various values of parameters $\alpha_1, \alpha_2$.}\label{dimension1}
\end{figure}
As an illustration consider the special case when $p_0=1$, $p_1=n^{-\alpha_1}$ and $p_2=n^{-\alpha_2}$ with $\alpha_1, \alpha_2$ being constant (i.e. independent of $n$). Then for $\alpha_1>1$ the random complex $Y$ is disconnected and for $\alpha_1<1$ the complex $Y$ is connected (see Example \ref{example63}) and for $3\alpha_1+2\alpha_2<1$ the complex $Y$ is simply connected (by Corollary \ref{cor73}). 
Figure \ref{simpleconnectivity} depicts the regions of connectivity. 

Figure \ref{dimension1} shows the dimension of a multi-parameter random simplicial complex, again assuming that 
$p_1=n^{-\alpha_1}$ and $p_2=n^{-\alpha_2}$ with $\alpha_1, \alpha_2$ being constant and $p_i=1$ for $i=0, 3, 4, \dots$. Details and proofs can be found in \cite{CF14}.  

In a forthcoming paper we shall show that in the domain $$1<3\alpha_1+2\alpha_2, \quad 0<\alpha_1<1, \quad 0<\alpha_2$$ the 
fundamental group of a random simplicial complex is nontrivial and hyperbolic in the sense of Gromov. It is a non-trivial random group depending on three parameters $p_0, p_1, p_2$ and we shall describe regions of various cohomological dimension and torsion in this random group.

\bibliographystyle{amsalpha}

\end{document}